\renewcommand*{\backref}[1]{}
\def\widebar{\accentset{{\cc@style\underline{\mskip10mu}}}}
\spnewtheorem{theorem}{Theorem}[section]{\bfseries}{\itshape}
\spnewtheorem{lemma}[theorem]{Lemma}{\bfseries}{\itshape}
\spnewtheorem{corollary}[theorem]{Corollary}{\bfseries}{\itshape}
\spnewtheorem{problem}[theorem]{Problem}{\bfseries}{\itshape}
\spnewtheorem{definition}[theorem]{Definition}{\bfseries}{\itshape}
\spnewtheorem{example}[theorem]{Example}{\bfseries}{\itshape}
\spnewtheorem{proposition}[theorem]{Proposition}{\bfseries}{\itshape}
\spnewtheorem{remark}[theorem]{Remark}{\bfseries}{\upshape}
\spnewtheorem{assumption}[theorem]{Assumption}{\bfseries}{\itshape}
\newcommand{\im}{\mathop{\rm Im}\nolimits}
\newcommand{\re}{\mathop{\rm Re}\nolimits}
\newcommand{\esssup}{\mathop{\rm ess~sup}}
\newcommand{\ran}{\mathop{\rm ran}}
\begin{document}

\title{Semi-uniform Input-to-state Stability
	of Infinite-dimensional  Systems
	\thanks{This work was supported by JSPS KAKENHI Grant Number JP20K14362.}
}

\author{Masashi Wakaiki   
}

\institute{M.~Wakaiki \at
              Graduate School of System Informatics, Kobe University, Nada, Kobe, Hyogo 657-8501, Japan \\
              Tel.: +8178-803-6232 \\
              Fax: +8178-803-6392\\
              \email{wakaiki@ruby.kobe-u.ac.jp}           
}

\date{Received: date / Accepted: date}

\maketitle

\begin{abstract}
We introduce the notions of semi-uniform input-to-state stability 
and its subclass, polynomial input-to-state stability,
for
infinite-dimensional systems.
We establish a characterization of semi-uniform input-to-state
stability based on attractivity properties as in the uniform case.
Sufficient conditions for linear systems to be polynomially 
input-to-state stable are  provided, which
restrict the range of the input operator depending on
the rate of polynomial decay of the product of the semigroup and
the resolvent of its generator.
We also show that a class of 
bilinear systems are polynomially integral input-to-state
stable under a certain smoothness assumption on nonlinear operators.

\keywords{
	Infinite-dimensional systems \and
	Input-to-state stability \and Polynomial stability \and
	$C_0$-semigroups}
\subclass{47D06 \and 47N70  \and 93C25   \and 93D09}
\end{abstract}

\section{Introduction}
Consider a semi-linear system with state space $X$ and
input space $U$ (both Banach spaces):
\begin{equation}
\label{eq:system_intro}
\dot x(t) = Ax(t) +F\big(x(t),u(t)\big),\quad t \geq 0;\qquad x(0) = x_0 \in X,
\end{equation}
where $A$ with domain $D(A)$
is the generator of a $C_0$-semigroup $(T(t))_{t\geq 0}$
on $X$
and $F:X\times U \to X$ is a nonlinear operator
such that the solution $x$ of \eqref{eq:system_intro} exists
on $[0,\infty)$ for each essentially bounded input $u$.
We are interested in the cases where 
\begin{itemize}
\item
the $C_0$-semigroup $(T(t))_{t\geq 0}$ is {\em semi-uniformly stable}, that is,
$(T(t))_{t\geq 0}$ is uniformly bounded and 
$\| T(t)(I-A)^{-1} \| \to 0$ as $t\to \infty$;
\item
the $C_0$-semigroup $(T(t))_{t\geq 0}$ is {\em polynomially stable with 
	parameter $\alpha>0$},
that is,
$(T(t))_{t\geq 0}$ is semi-uniformly stable and 
$\|T(t)(I-A)^{-1} \| = O(t^{-1/\alpha})$ as $t\to \infty$, which means that
there is a constant $M>0$ such that for all $t>0$,
\[
\|T(t)(I-A)^{-1} \| \leq \frac{M}{t^{1/\alpha}}.
\]
\end{itemize}
In this paper, we introduce and study new notions of
input-to-state stability (ISS), which are closely
related to semi-uniformly stable semigroups and
polynomially stable semigroups.

The concept of ISS has been introduced
for ordinary differential equations in \cite{Sontag1989}.
This concept combines asymptotic stability with respect to initial states
and robustness against external inputs.
Motivated by robust stability analysis of partial differential equations,
ISS has been recently studied  for 
infinite-dimensional systems, e.g., in \cite{Jayawardhana2008,
	Dashkovskiy2013,
	Karafyllis2016,
	Nabiullin2018,
	Mironchenko2018, Hosfeld2022,
	Schmid2019,
	Jacob2018,Jacob2020}; 
see also the survey \cite{Mironchenko2020}.
Exponential stability of $C_0$-semigroups plays an important role
in the theory of uniform ISS.
The concept of ISS related to
strong stability of $C_0$-semigroups has been also 
introduced in \cite{Mironchenko2018}.
This stability concept is called strong ISS.

Exponential stability of $C_0$-semigroups is a strong property
in terms of quantitative asymptotic character and robustness against 
perturbations.
However, we sometimes encounter systems that is strongly stable but
not exponentially stable.
Strong stability is distinctly qualitative in character unlike exponential
stability and has a much
weaker asymptotic property than exponential stability.
Hence, it does not hold in general that
the system \eqref{eq:system_intro} with generator $A$ of a strongly stable semigroup
is {\em uniformly globally stable},
that is, there exist functions $\gamma,\mu \in \mathcal{K}_{\infty}$ such that 
\[
\|x(t)\| \leq \gamma(\|x_0\|) + \mu 
\left(\esssup_{0\leq t < \infty} \|u(t)\|_U\right)
\] 
for all $x_0 \in X$, essentially bounded functions
$u: [0,\infty)\to U$, and $t\geq 0$,
even when $F(\xi,v) = Bv$ ($\xi \in X$, $v \in U$) for some bounded linear operator 
$B$ from $U$ to $X$, as shown in
Theorem~3 of \cite{Nabiullin2018}.
Here  $\mathcal{K}_\infty$ 
is the set of the classic comparison functions from nonlinear systems theory;
see the notation paragraph at the end of this section.
The same is true for the sets $\mathcal{KL}$ and $\mathcal{K}$ we will use 
below.

Semi-uniform stability and its subclass, polynomial stability, lie
between the two notions of semigroup stability, exponential stability
and strong stability, in the sense that 
semi-uniform stability leads to the quantified asymptotic behavior
of trajectories with initial states
in the domain of the generator.
Semi-uniformly stable semigroups 
have been extensively studied, and it has been shown that 
various partial differential equations such as weakly damped wave equations
are semi-uniformly stable. We refer, for example, to \cite{Liu2005PDR, 
	Batkai2006, Batty2008,
	Borichev2010, Paunonen2011,Paunonen2012SS,Paunonen2013SS,
	Paunonen2014OM,Rozendaal2019,
	Chill2019,Chill2020, Su2020} for the developments of semi-uniformly stable 
semigroups and polynomially stable semigroups.
The main motivation of introducing semi-uniform and polynomial versions of 
ISS
is to bridge the gap between uniform ISS and strong ISS as in the semigroup case.

In a manner analogous to semi-uniform stability of semigroups,
we define semi-uniform ISS of the system \eqref{eq:system_intro}
as follows.
The semi-linear 
system \eqref{eq:system_intro} is {\em semi-uniformly ISS}
if 
the system is uniformly globally stable and if
there exist functions $\kappa \in \mathcal{KL}$ and $\mu \in \mathcal{K}_{\infty}$ 
such that 
\begin{equation}
\label{eq:semi_uniform_ISS_intro}
\|x(t)\| \leq \kappa(\|x_0\|_A,t) + \mu \left(\esssup_{0\leq t < \infty} \|u(t)\|_U\right)
\end{equation}
for all $x_0 \in D(A)$, essentially bounded functions
$u: [0,\infty)\to U$, and $t\geq 0$, where $\|\cdot\|_A$
is the graph norm of $A$, i.e., $\|x_0\|_A := 
\|x_0\| + \|Ax_0\|$ for $x_0 \in D(A)$.
We provide a characterization of semi-uniform ISS
based on attractivity properties called 
the limit property and the asymptotic gain property.
These properties have been introduced in \cite{Sontag1996}
 in order to
characterize ISS of ordinary
differential equations. For infinite-dimensional systems,
such attractivity-based 
characterizations have been established for
uniform ISS \cite[Theorem~5]{Mironchenko2018}, strong ISS \cite[Theorem~12]{Mironchenko2018}, and weak ISS \cite[Theorem~3.1]{Schmid2019}.
Using the attractivity properties, we show that 
semi-uniform ISS implies
strong ISS for linear systems and bilinear systems.

The semi-linear 
system \eqref{eq:system_intro} is called {\em polynomially ISS
with parameter $\alpha >0$} 
if the system is semi-uniformly ISS and if a $\mathcal{KL}$ function $\kappa$ 
in \eqref{eq:semi_uniform_ISS_intro} satisfies
$\kappa(r,t) =
O(t^{-1/\alpha})$ as $t\to \infty$ for all $r > 0$.
We study
polynomial ISS of
 the linear system 
 \begin{equation}
 \label{eq:system_intro_linear}
 \dot x(t) = Ax(t) +Bu(t),\quad t \geq 0;\qquad x(0) = x_0 \in X,
 \end{equation}
 where $A$ is the generator of
a polynomially stable semigroup $(T(t))_{t\geq 0}$ with parameter $\alpha >0$
on $X$
and the input operator 
$B$ is a bounded 
linear operator from $U$ to $X$.
It is readily verified that 
polynomial ISS is equivalent to infinite-time admissibility
of input operators
(together with polynomial stability of $C_0$-semigroups);
see, e.g., \cite{Weiss1989} and \cite[Chapter 4]{Tucsnak2009} for admissibility.
Using this equivalence,
we show that the linear system~\eqref{eq:system_intro_linear} is
polynomial ISS if $B$
maps into the domain of the fractional power of $-A$
with exponent $\beta>\alpha$.
Similar conditions are placed for the range of a
perturbation operator in the robustness analysis of polynomial 
stability \cite{Paunonen2011,Paunonen2012SS,Paunonen2013SS}.
This sufficient condition is 
refined in the case where
$A$ is a diagonalizable operator (see Definition~\ref{def:diagonalizable})
on a Hilbert space
and $B$ is of finite rank. Moreover, when
the eigenvalues $(\lambda_n)_{n \in \mathbb{N}}$ of 
the diagonalizable operator $A$
satisfy $|\im \lambda_n - \im \lambda_m| \geq d$ for some $d >0$
and for all distinct $n,m \in \mathbb{N}$ with $\lambda_n,\lambda_m $ near the imaginary axis,
we give a necessary and sufficient condition 
for polynomial ISS. To this end, we utilize
the relation between Laplace-Carleson embeddings and infinite-time 
admissibility established in Theorem~2.5 of \cite{Jacob2021_arXiv}.

Even in the linear case, uniform global stability and hence polynomial 
ISS impose a strict condition on boundedness of input operators when
$C_0$-semigroups are 
polynomially stable but not exponentially stable.
This observation motivates us to study
a variant of ISS called integral ISS \cite{Sontag1998}. We introduce
a notion of {\em polynomial integral ISS with parameter $\alpha>0$}, 
which means that there exist functions 
$\kappa \in \mathcal{KL}$,
$\gamma, \theta \in \mathcal{K}_{\infty}$, and $\mu \in \mathcal{K}$ such that 
the following three conditions hold: 
(i) $\|x(t)\| \leq \gamma(\|x_0\|)$ for all $x_0 \in X$ and $t \geq 0$
in the zero-input case $u(t)\equiv 0$; (ii) it holds that
\[
\|x(t)\| \leq 
\kappa (\|x_0\|_A, t) + \theta \left(
\int^t_0 \mu (\|u(s)\|_U)\mathrm{d}s
\right)
\]
for all $x_0 \in D(A)$, essentially bounded functions
$u: [0,\infty)\to U$, and $t\geq 0$;
(iii) $\kappa(r,t) = O(t^{-1/\alpha})$ as $t\to \infty$ 
for all $r >0$.
By definition,
we immediately see that the linear system  \eqref{eq:system_intro_linear}
 is polynomially integral ISS
for all generators of polynomially stable semigroups and 
all bounded input operators.
Moreover, 
we prove that 
bilinear systems are also polynomially integral ISS
provided that 
the product  of the $C_0$-semigroup and
the nonlinear operator has the same polynomial decay rate as $\|T(t)(I-A)^{-1}\|$. 
This result is a polynomial analogue 
of Theorem~4.2 in \cite{Mironchenko2016} on
uniform integral ISS.

This paper is organized as follows.
In Section~\ref{sec:basic_facts}, we review some basic facts
on semi-uniform stability and polynomial stability of $C_0$-semigroups.
In Section~\ref{sec:semi_uniform_ISS}, we provide a characterization
of semi-uniform ISS and investigate the relation between
semi-uniform ISS and strong ISS.
Polynomial ISS of linear systems and 
polynomial integral ISS of bilinear systems 
are studied in Sections~\ref{sec:PolyISS} and \ref{sec:PolyiISS},
respectively.

\paragraph*{Notation:}~
Let $\mathbb{N}_0$ and $\mathbb{R}_+$ denote the set of nonnegative integers
and the set of nonnegative real numbers, respectively.
Define 
$i\mathbb{R} := \{i s : s \in \mathbb{R}\}$.
For real-valued functions $f,g$ on $\mathbb{R}$, we write
\[
f(t) = O\big(g(t) \big)\qquad \text{as $t \to \infty$}
\] 
if
there exist $M>0$ and $t_0 \in \mathbb{R}$ such that 
$f(t) \leq Mg(t)$ for all $t \geq t_0$.
Let $X$ and $Y$ be Banach spaces. 
The space of
all bounded linear operators from $X$ to $Y$ is denoted by
$\mathcal{L}(X,Y)$. We write $\mathcal{L}(X) := \mathcal{L}(X,X)$.
The domain and the range of a linear operator $A:X \to Y$ are denoted by $D(A)$ and $\ran(A)$,
respectively.
We denote by $\sigma(A)$ and $\varrho(A)$
the spectrum and 
the resolvent set of a linear operator $A:D(A) \subset X \to X$, respectively.
We write
$R(\lambda,A) := (\lambda I - A)^{-1}$ for $\lambda \in \varrho(A)$.
The graph norm $\|\cdot\|_A$ of a linear operator $A:D(A) \subset X \to X$
is defined by $\|x\|_A := \|x\|+\|Ax\|$ for $x \in D(A)$.
We denote by $L^{\infty}(\mathbb{R}_+,X)$ 
the space of all measurable functions $f : \mathbb{R}_+ \to X$ such that 
$\|f\|_{\infty} := \esssup_{t \in \mathbb{R}_+ }\|f(t)\|< \infty$. 
We denote by $C(\Omega,X)$
the space of all continuous functions from a topological space $\Omega$ to $X$. 
Let  $Z$ and $W$ be Hilbert spaces. 
The Hilbert space adjoint of $T \in \mathcal{L}(Z,W)$ is denoted
by $T^*$.

Classes of
comparison functions for ISS are defined as follows:
\begin{align*}
\mathcal{K} &:= 
\{
\mu \in C(\mathbb{R}_+,\mathbb{R}_+) :
\text{$\mu$ is strictly increasing and $\mu(0) = 0$} \} \\
\mathcal{K}_{\infty} &:= 
\{
\mu \in \mathcal{K} : 
\text{$\mu$ is unbounded}
\} \\
\mathcal{L} &:=
\left\{
\mu \in C(\mathbb{R}_+,\mathbb{R}_+) :
\text{$\mu$ is strictly decreasing with $
	\displaystyle
	\lim_{t\to \infty} \mu(t) = 0$} \right\}\\
\mathcal{KL} &:=
\{
\kappa \in C(\mathbb{R}_+ \times \mathbb{R}_+,\mathbb{R}_+) :
\text{$\kappa(\cdot , t) \in \mathcal{K}~~\forall t \geq 0$ and 
	$\kappa(r,\cdot) \in \mathcal{L}~~\forall r >0$}
\}.
\end{align*}

\section{Basic facts on semi-uniform stability
		and polynomial stability of  semigroups}
\label{sec:basic_facts}
We start by recalling the notion of semi-uniform stability 
of $C_0$-semigroups introduced in Definition~1.2 of \cite{Batty2008}.
\begin{definition}
	{\em
		A $C_0$-semigroup $(T(t))_{t\geq 0}$
		on a Banach space with generator $A$
is
{\em semi-uniformly stable}  if 
$(T(t))_{t\geq 0}$ is uniformly bounded and satisfies
\begin{equation}
\label{eq:T_resolvent_conv}
\|T(t)R(1,A)\| \to 0\qquad \text{as $t \to \infty$}.
\end{equation}
	}
\end{definition}

The following characterization of \eqref{eq:T_resolvent_conv}  in terms of
the intersection of $\sigma(A)$ with $i \mathbb{R}$
has been established in Theorem~1.1 of \cite{Batty2008}.

\begin{theorem}
	\label{thm:equivalence_SUS}
	Let 
	$A$ be the generator of a uniformly bounded semigroup 
	$(T(t))_{t\geq 0}$
	on a Banach space.
	Then \eqref{eq:T_resolvent_conv} holds 
	if and only if
	$\sigma(A) \cap i \mathbb{R}$ is empty.
\end{theorem}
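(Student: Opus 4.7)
The plan is to prove the two implications separately. The necessity (``$\Rightarrow$'') follows from standard spectral theory applied to the boundary of $\sigma(A)$, while the substantive direction is sufficiency (``$\Leftarrow$''), which goes back to the Arendt--Batty--Lyubich--Phong circle of ideas and requires contour-deformation/Tauberian arguments.

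For ``$\Rightarrow$'', I would argue by contradiction. If $is_0 \in \sigma(A)$ for some $s_0 \in \mathbb{R}$, then since $\sigma(A) \subseteq \{\re z \le 0\}$ by uniform boundedness of $(T(t))_{t\ge 0}$, the point $is_0$ lies on the boundary of $\sigma(A)$ and hence in the approximate point spectrum. Pick unit vectors $x_n \in D(A)$ with $\|(A - is_0 I) x_n\| \to 0$. Two short computations then give $\|R(1,A) x_n - (1-is_0)^{-1} x_n\| \to 0$ (from the identity $R(1,A) x_n = (1-is_0)^{-1}[x_n + R(1,A)(A - is_0 I) x_n]$) and $\|T(t) x_n - e^{is_0 t} x_n\| \to 0$ for each fixed $t$ (from $e^{-is_0 t} T(t) x_n - x_n = \int_0^t e^{-is_0 s} T(s)(A - is_0 I) x_n \, ds$ together with uniform boundedness of the semigroup). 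Combining these yields $\|T(t) R(1, A) x_n\| \to |1 - is_0|^{-1} > 0$ for every fixed $t$, contradicting $\|T(t) R(1, A)\| \to 0$.

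For ``$\Leftarrow$'', the hypothesis $\sigma(A) \cap i\mathbb{R} = \emptyset$ together with $\sigma(A) \subseteq \{\re z \le 0\}$ means $R(\lambda, A)$ extends analytically across $i\mathbb{R}$ to an open neighborhood thereof. Combining the complex inversion formula with the resolvent identity $R(\lambda, A) R(1, A) = (1-\lambda)^{-1}[R(\lambda, A) - R(1, A)]$, I would represent, for $y \in D(A)$ and $a > 0$,
\begin{equation*}
T(t) R(1, A) y = \frac{1}{2\pi i} \int_{a - i\infty}^{a + i\infty} e^{\lambda t} R(\lambda, A) R(1, A) y \, d\lambda,
\end{equation*}
where the extra factor $|1-\lambda|^{-1}$ makes the integral converge absolutely along vertical lines. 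Shifting the contour onto $i\mathbb{R}$ is permissible since no singularities are crossed, and a Riemann--Lebesgue/Ingham Tauberian argument applied to the resulting integral gives strong decay $T(t) R(1, A) y \to 0$.

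The main obstacle, and the step at which the conclusion truly goes beyond the strong-stability theorem of Arendt--Batty--Lyubich--Phong, is upgrading strong decay to operator-norm decay. Without quantitative growth information on $\|R(i\mu, A)\|$ as $|\mu| \to \infty$, one cannot produce an explicit rate; instead, one must exploit the regularizing effect of $R(1, A)$ on both the central and tail portions of the contour to obtain a bound that is uniform in $y$ on the unit ball and tends to zero. Striking this balance using only a qualitative spectral hypothesis---rather than a quantitative resolvent estimate such as the polynomial one used in later sections of the paper---is the delicate step, where a Phragm\'en--Lindel\"of-type argument must replace the direct decay that a prescribed resolvent rate would provide.
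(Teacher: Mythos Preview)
The paper does not give its own proof: it records the result and cites Theorem~1.1 of Batty--Duyckaerts \cite{Batty2008}. So there is no in-paper argument to compare against, and your proposal should be read as an attempt to recover (a version of) the cited proof.

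Your necessity direction (``$\Rightarrow$'') is correct and standard.

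For sufficiency (``$\Leftarrow$''), the outline is in the right spirit but contains a concrete gap even before the step you flag as delicate. You propose to shift the vertical contour $a+i\mathbb{R}$ entirely onto $i\mathbb{R}$ and then invoke a Riemann--Lebesgue argument. The difficulty is that nothing in the hypotheses controls the growth of $\|R(is,A)\|$ as $|s|\to\infty$: each value is finite, but there is no uniform bound, so combined with the mere $|1-is|^{-1}$ decay coming from the resolvent identity the shifted integral need not converge, and the contour shift on the tails is not justified. Even granting convergence, Riemann--Lebesgue would only give strong decay of $T(t)R(1,A)$, which already follows from the Arendt--Batty/Lyubich--V\~u theorem and is strictly weaker than the operator-norm statement. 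Your final paragraph is right that the norm upgrade is the crux, but the device you name---Phragm\'en--Lindel\"of---is not what carries the argument in \cite{Batty2008}.

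What the Batty--Duyckaerts proof actually does is a \emph{partial}, parametrized contour deformation. For each $R>0$ set $M(R):=\sup_{|s|\le R}\|R(is,A)\|<\infty$; a Neumann-series argument then extends the resolvent, with norm at most $2M(R)$, to the region $\{|\im\lambda|\le R,\ \re\lambda>-c/M(R)\}$. One deforms only the central portion of the inversion contour a distance of order $1/M(R)$ into the left half-plane (in the style of the Newman--Korevaar proof of the Ingham--Karamata theorem), while the tails remain in $\{\re\lambda>0\}$. This produces a bound on $\|T(t)R(1,A)\|$, uniform over the unit ball, consisting of a term that decays in $t$ (from the shifted central piece) and a term that is small when $R$ is large (from the tails). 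Since $R$ is a free parameter, one concludes $\|T(t)R(1,A)\|\to 0$; optimizing $R$ as a function of $t$ is precisely how the quantified rates in \cite{Batty2008} are obtained. This finite-$R$ deformation, rather than a global shift to the imaginary axis, is the missing ingredient in your sketch.
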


Quantitative statements on the decay rate, as $t\to \infty$, of $\|T(t)R(1,A)\| $
and the blow-up rate, as $s \to \infty$, of $R(is,A)$ have been 
also given, e.g., in \cite{Liu2005PDR,Batkai2006,
	Batty2008,Borichev2010,Rozendaal2019,Chill2019}.
In particular, we are interested in semi-uniform stability
with polynomial decay rates studied in 
\cite{Liu2005PDR, Batkai2006, Borichev2010}.
Note that $\|T(t)R(1,A)\|$  and $\|T(t)A^{-1}\|$ are 
asymptotically of the same order,
which easily follows from the resolvent equation.
\begin{definition}
	{\em
		A $C_0$-semigroup $(T(t))_{t\geq 0}$
		on a Banach space  with generator $A$ is {\em polynomially 
			stable with parameter $\alpha >0$} if $(T(t))_{t\geq 0}$ is semi-uniformly stable and satisfies
		\begin{equation}
		\label{eq:poly_estimate}
		\|T(t)A^{-1}\| = O\left(\frac{1}{t^{1/\alpha}}
		\right)\qquad \text{as $t \to \infty$}.
		\end{equation}
	}
\end{definition}

For
the generator $A$ of a uniformly bounded semigroup,
$-A$ is sectorial in the sense of
\cite[Chapter~2]{Haase2006}. Therefore, if $A$ is injective, then 
the fractional power $(-A)^{\beta}$ is well defined for $\beta \in \mathbb{R}$.
The following result gives  the rate of polynomial decay of $\|T(t)(-A)^{-\beta}\|$ 
for $\beta>0$;
see Proposition~3.1 of \cite{Batkai2006} for the proof.
\begin{proposition}
	\label{prop:decay_rate}
	Let $(T(t))_{t\geq 0}$ be a uniformly bounded semigroup  on
	a Banach space with generator $A$ such that $0 \in  \varrho (A)$.
	For fixed $\alpha ,\beta >0$, 
	\begin{equation*}
		\|T(t)A^{-1}\| = O\left(\frac{1}{t^{1/\alpha}}
		\right)\qquad \text{as $t \to \infty$}
	\end{equation*}
	if and only if
	\[
	\|T(t)(-A)^{-\beta}\| = O\left(\frac{1}{t^{\beta/\alpha}}
	\right)\qquad \text{as $t \to \infty$}.
	\]	
\end{proposition}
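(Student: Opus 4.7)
The plan is to derive both directions from two standard tools in the functional calculus of the sectorial operator $-A$: the \emph{moment inequality}
\[
\|(-A)^{\theta\gamma}y\|\leq C_{\theta,\gamma}\,\|y\|^{1-\theta}\,\|(-A)^{\gamma}y\|^{\theta},\qquad y\in D((-A)^{\gamma}),\ 0\leq\theta\leq 1,\ \gamma>0,
\]
available from \cite{Haase2006}, and the commutation identity
\[
T(t)(-A)^{-\nu}=\bigl(T(t/n)(-A)^{-\nu/n}\bigr)^{n},\qquad n\in\mathbb{N},\ \nu>0,
\]
which follows from $T(s)(-A)^{-\rho}=(-A)^{-\rho}T(s)$ together with $(-A)^{-\rho_{1}}(-A)^{-\rho_{2}}=(-A)^{-(\rho_{1}+\rho_{2})}$. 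Throughout, set $M:=\sup_{t\geq 0}\|T(t)\|<\infty$, and note $\|T(t)A^{-1}\|=\|T(t)(-A)^{-1}\|$.

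For the forward implication, assume $\|T(t)(-A)^{-1}\|=O(t^{-1/\alpha})$. For $0<\beta\leq 1$, I would apply the moment inequality with $\gamma=1$, $\theta=1-\beta$, and $y=T(t)(-A)^{-1}x$; the identities $(-A)^{1-\beta}y=T(t)(-A)^{-\beta}x$ and $(-A)y=T(t)x$ yield
\[
\|T(t)(-A)^{-\beta}x\|\leq C\,\|T(t)(-A)^{-1}\|^{\beta}\,M^{1-\beta}\,\|x\|=O(t^{-\beta/\alpha})\,\|x\|.
\]
For $\beta>1$, I would pick an integer $n>\beta$; the commutation identity gives $\|T(t)(-A)^{-n}\|\leq\|T(t/n)(-A)^{-1}\|^{n}=O(t^{-n/\alpha})$, and then the moment inequality with $\gamma=n$, $\theta=(n-\beta)/n$, and $y=T(t)(-A)^{-n}x$, combined with $(-A)^{n-\beta}y=T(t)(-A)^{-\beta}x$ and $(-A)^{n}y=T(t)x$, produces
\[
\|T(t)(-A)^{-\beta}x\|\leq C\,\|T(t)(-A)^{-n}\|^{\beta/n}\,M^{(n-\beta)/n}\,\|x\|=O(t^{-\beta/\alpha})\,\|x\|.
\]

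For the converse, suppose $\|T(t)(-A)^{-\beta}\|=O(t^{-\beta/\alpha})$. If $\beta\geq 1$, the moment inequality with $\gamma=\beta$, $\theta=(\beta-1)/\beta$, and $y=T(t)(-A)^{-\beta}x$, using $(-A)^{\beta-1}y=T(t)(-A)^{-1}x$ and $(-A)^{\beta}y=T(t)x$, directly yields
\[
\|T(t)(-A)^{-1}x\|\leq C\,\|T(t)(-A)^{-\beta}\|^{1/\beta}\,M^{(\beta-1)/\beta}\,\|x\|=O(t^{-1/\alpha})\,\|x\|.
\]
If $0<\beta<1$, I would choose $n\in\mathbb{N}$ with $n\beta\geq 1$ and invoke the commutation identity to obtain $\|T(t)(-A)^{-n\beta}\|\leq\|T(t/n)(-A)^{-\beta}\|^{n}=O(t^{-n\beta/\alpha})$; then the $\beta\geq 1$ case applied with exponent $n\beta$ gives $\|T(t)(-A)^{-1}\|=O(t^{-1/\alpha})$.

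The main delicate point is not any individual estimate but the functional-calculus bookkeeping: justifying that $(-A)^{\gamma}$ commutes with $T(t)$, that the vectors $T(t)(-A)^{-k}x$ actually lie in the required domains $D((-A)^{\gamma})$, and that the moment inequality applies here in the stated form. These verifications are standard for the sectorial operator $-A$ under the standing hypotheses $0\in\varrho(A)$ and $\sup_{t\geq 0}\|T(t)\|<\infty$, and reduce to routine manipulations within the calculus of \cite{Haase2006}.
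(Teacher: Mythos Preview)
Your argument is correct: the moment inequality for fractional powers of the sectorial operator $-A$ combined with the factorisation $T(t)(-A)^{-\nu}=\bigl(T(t/n)(-A)^{-\nu/n}\bigr)^{n}$ is precisely the standard route to this result, and your case splits ($\beta\le 1$ versus $\beta>1$, and symmetrically for the converse) are handled cleanly. The paper itself does not supply a proof; it simply refers the reader to Proposition~3.1 of \cite{Batkai2006}, where the argument is exactly the one you outline---interpolation via the moment inequality together with the semigroup law to bootstrap between integer and non-integer exponents. So your proposal is both correct and essentially the same as the proof in the cited source.
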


For $C_0$-semigroups generated by normal 
operators on Hilbert spaces,
a spectral condition equivalent to
polynomial decay is known.
The proof can be found in Proposition~4.1 of \cite{Batkai2006}.
\begin{proposition}
	\label{prop:spectral_prop}
	Let $(T(t))_{t\geq 0}$ 
	be the $C_0$-semigroup on a Hilbert space
	generated by a normal operator $A$ whose spectrum $\sigma(A)$ 
	is contained 
	in the open left half-plane
	$\{\lambda \in \mathbb{C}:\re \lambda <0 \}$.
	For a fixed $\alpha>0$,
	\begin{equation*}
		\|T(t)A^{-1}\| = O\left(\frac{1}{t^{1/\alpha}}
		\right)\qquad \text{as $t \to \infty$}
	\end{equation*}
	if and only if there exist $C,p>0$ such that 
	\[
	|\im \lambda| \geq \frac{C}{|\re \lambda|^{1/\alpha}}
	\]
	for all $\lambda \in \sigma(A)$ with $\re \lambda > -p$.
\end{proposition}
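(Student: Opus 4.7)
My plan is to reduce the equivalence to a resolvent estimate on the imaginary axis via the Borichev--Tomilov characterization of polynomial decay established in \cite{Borichev2010}. Since $(T(t))_{t\geq 0}$ is uniformly bounded and $\sigma(A)\subset\{\lambda\in\mathbb{C}:\re\lambda<0\}$ forces $\sigma(A)\cap i\mathbb{R}=\emptyset$, that theorem gives
\begin{equation*}
\|T(t)A^{-1}\|=O(t^{-1/\alpha}) \text{ as } t\to\infty
\quad\Longleftrightarrow\quad
\|R(is,A)\|=O(|s|^{\alpha}) \text{ as } |s|\to\infty.
\end{equation*}
Because $A$ is normal with $0\in\varrho(A)$, the spectral theorem for normal operators on a Hilbert space yields the explicit formula $\|R(is,A)\|=1/\mathrm{dist}(is,\sigma(A))$. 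Thus the proposition reduces to showing that the asymptotic lower bound $\mathrm{dist}(is,\sigma(A))\geq c/|s|^{\alpha}$ for large $|s|$ is equivalent to the pointwise condition $|\im\lambda|\geq C/|\re\lambda|^{1/\alpha}$ for every $\lambda\in\sigma(A)$ with $\re\lambda>-p$.

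For the forward direction, given $\lambda=a+ib\in\sigma(A)$ with $-p<a<0$, I would take $s=b$ so that $|is-\lambda|=|a|=|\re\lambda|$; the distance estimate then rearranges directly to $|\im\lambda|\geq c^{1/\alpha}/|\re\lambda|^{1/\alpha}$. The one subtlety is that the distance bound is only asymptotic in $|s|$, so I need to choose $p>0$ small enough that every $\lambda\in\sigma(A)$ with $\re\lambda>-p$ already has $|\im\lambda|$ as large as required. This is where I would use that $\sigma(A)$ is closed and disjoint from $i\mathbb{R}$: any sequence $\lambda_n\in\sigma(A)$ with $\re\lambda_n\to0$ and $(\im\lambda_n)$ bounded would produce, by Bolzano--Weierstrass, a limit point in $\sigma(A)\cap i\mathbb{R}$, contradicting semi-uniform stability.

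The reverse direction is a case analysis on $\lambda=a+ib\in\sigma(A)$ for fixed $s\in\mathbb{R}$ with $|s|$ large. If $a\leq -p$, then $|is-\lambda|\geq p$. If $-p<a<0$ and $|b|\leq 2|s|$, the spectral hypothesis gives $|a|\geq C^{\alpha}/|b|^{\alpha}\geq C^{\alpha}/(2|s|)^{\alpha}$, so $|is-\lambda|\geq |a|\geq (C/2)^{\alpha}|s|^{-\alpha}$. If $|b|>2|s|$, then $|is-\lambda|\geq |s-b|\geq |b|/2>|s|$, which for large $|s|$ dominates $|s|^{-\alpha}$. Taking the infimum over $\lambda$ produces the required lower bound on $\mathrm{dist}(is,\sigma(A))$, and combining with the Borichev--Tomilov equivalence concludes the proof.

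The main obstacle I anticipate is the topological step in the forward direction: one must carefully exploit the closedness of $\sigma(A)$ together with $\sigma(A)\cap i\mathbb{R}=\emptyset$ to guarantee that $p$ can be chosen so small that the asymptotic resolvent estimate actually applies to every $\lambda$ with $\re\lambda>-p$. Once this is in place, the rest is planar geometry combined with the spectral formula for $\|R(is,A)\|$.
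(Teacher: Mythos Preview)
Your argument is correct. The paper does not supply its own proof of this proposition; it simply cites Proposition~4.1 of B\'atkai, Engel, Pr\"uss and Schnaubelt \cite{Batkai2006}. That reference predates the Borichev--Tomilov theorem \cite{Borichev2010} you invoke, so the original proof proceeds differently: for a normal generator the spectral theorem gives the explicit formula
\[
\|T(t)A^{-1}\| \;=\; \sup_{\lambda\in\sigma(A)} \frac{e^{t\,\re\lambda}}{|\lambda|},
\]
and the equivalence is obtained by optimising this scalar expression directly, balancing $e^{t\,\re\lambda}$ against $|\lambda|$ along the spectrum. Your route instead passes through the resolvent growth characterization $\|R(is,A)\|=O(|s|^{\alpha})$ and the identity $\|R(is,A)\|=1/\mathrm{dist}(is,\sigma(A))$, reducing everything to planar geometry. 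Both approaches are short; yours has the advantage of isolating the Hilbert-space input (Borichev--Tomilov) from the normality input (the distance formula), while the original avoids the black-box appeal to \cite{Borichev2010} and is self-contained via functional calculus. The topological step you flag---using closedness of $\sigma(A)$ and $\sigma(A)\cap i\mathbb{R}=\emptyset$ to push $|\im\lambda|$ beyond the threshold where the asymptotic resolvent bound applies---is handled exactly as you describe and presents no difficulty.
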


\section{Characterization of semi-uniform input-to-state stability}
\label{sec:semi_uniform_ISS}
In this section, we first present the nonlinear system we consider and 
introduce the notion of semi-uniform input-to-state stability.
Next, we develop a characterization of this stability.
Finally, the relation between semi-uniform input-to-state stability and 
 strong input-to-state stability 
is investigated.

\subsection{System class}
	Let $X$ and $U$ be Banach spaces with norm $\|\cdot\|$ and
$\|\cdot\|_U$, respectively. Let
$\mathcal{U}$ be a normed vector space contained in the space
$L^1_{\text{loc}}(\mathbb{R}_+,U)$ of
all locally integrable functions from $\mathbb{R}_+$ to $U$. We denote by
$\|\cdot\|_{\mathcal{U}}$ the norm on $\mathcal{U}$.
Assume that
	$u(\cdot + \tau) \in \mathcal{U}$ and 
	$\|u\|_{\mathcal{U}} \geq \|u(\cdot + \tau)\|_{\mathcal{U}} $
	for all $u \in \mathcal{U}$ and $\tau \geq 0$.
We are interested in the case 
$\mathcal{U} = L^{\infty}(\mathbb{R}_+,U)$, but
a general space $\mathcal{U}$ is used
for the definition of semi-uniform input-to-state stability 
and its characterization.

Consider a semi-linear system  with state space $X$
and input space $U$:
\begin{align*}
	\Sigma(A,F) \hspace{10pt}
	\begin{cases}	
		\dot x(t) = Ax(t) + F\big(x(t),u(t)\big),\quad t\geq 0\\ 
		x(0) = x_0,
	\end{cases}
\end{align*}
where 
$A$ is the generator of a $C_0$-semigroup $(T(t))_{t\geq 0}$ on $X$, 
$F:X\times U \to X$ is a nonlinear operator, $x_0 \in X$ is an initial state, and
$u \in \mathcal{U}$ is an input. 

\begin{definition}
	{\em
	Suppose that 
	for every  $\tau>0$, $f \in C([0,\tau],X)$, and $g\in \mathcal{U}$,
	the map $t \mapsto F(f(t),g(t))$ is integrable on $[0,\tau]$.
	For $\tau>0$,
	a function $x \in C([0,\tau],X)$ is called {\em a mild solution of
		$\Sigma(A,F)$
		on $[0,\tau]$} 
	if $x$ satisfies
	the integral equation
	\begin{equation}
	\label{eq:mild_solution}
	x(t) = T(t)x_0 + \int^t_0 T(t-s)  F\big(x(s),u(s)\big) \mathrm{d}s\qquad \forall t \in [0,\tau].
	\end{equation}
	Moreover, we say that $x \in C(\mathbb{R}_+,X)$  is {\em a mild solution of 
		$\Sigma(A,F)$ on $\mathbb{R}_+$} if $x|_{[0,\tau]}$ is a mild solution of 
	$\Sigma(A,F)$ on $[0,\tau]$ for all $\tau >0$.
}
\end{definition}

By Proposition~1.3.4 of \cite{Arendt2001},
the integrability of the map $t \mapsto F(x(t),u(t))$ guarantees that 
the integral in \eqref{eq:mild_solution} exists as 
a Bochner integral and is continuous with respect to $t$.
If the nonlinear operator $F$ satisfies certain Lipschitz conditions, then
$\Sigma(A,F)$ has 
a unique mild solution on $[0,\tau]$ for some $\tau >0$.
We refer, e.g., to \cite{Mironchenko2018, Jacob2020} for 
the mild solution and uniform ISS properties 
of $\Sigma(A,F)$ in the case where $\mathcal{U}$
is the space of piecewise continuous functions that are bounded 
and right-continuous.
In this paper, we sometimes consider a class of 
bilinear systems whose nonlinear operators $F$ satisfy the next
assumption.
\begin{assumption}
	\label{assump:Lipschitz}
	{\em
		The nonlinear operator $F:X\times U \to X$ of 
		$\Sigma(A,F)$ 
		is decomposed into
		$F(\xi,v) = B\xi + G(\xi,v)$ for all $\xi \in X$ and $v \in U$,
		where $B \in \mathcal{L}(X,U)$ and 
		$G:X \times U \to X$ is a nonlinear operator satisfying
		the following conditions:
		\begin{enumerate}
			\item
			$G(0,v) = 0$ for all $v \in U$.
			\item For all $r>0$, there exist $K_r>0$ and $\chi_r \in \mathcal{K}$ such that 
			for all $\xi,\zeta \in X$ with $\|\xi\|,\|\zeta\| \leq r$ and all $v \in U$,
			\[
			\| G(\xi,v) - G(\zeta,v) \| \leq K_r \|\xi - \zeta\| \chi_r (\|v\|_U).
			\]
			\item 
			For all  $\tau>0$ and all functions
			$f \in C ([0,\tau], X)$, $g \in L^{\infty}([0,\tau],U)$,
			the map $t \mapsto G(f(t),g(t))$ is measurable on $[0,\tau]$.
		\end{enumerate}
	}
\end{assumption}

Note that the nonlinear operator $G$ has the following property
under Assumption~\ref{assump:Lipschitz}: For all $r>0$, there exist $K_r>0$ and $\chi_r \in \mathcal{K}$ such that 
for all $\xi\in X$ with $\|\xi\|\leq r$ and all $v \in U$,
\[
\| G(\xi,v)  \| \leq K_r \|\xi\| \chi_r (\|v\|_U).
\]
A standard argument using Gronwall's inequality and
Banach's fixed point theorem shows that
there exists 
a unique mild solution of $\Sigma(A,F)$ for $\mathcal{U} = L^{\infty}(\mathbb{R}_+,U)$
under Assumption~\ref{assump:Lipschitz};
see, e.g., Section~4.3.1 of \cite{Cazenave1998} or
Lemma~2.8 of \cite{Hosfeld2022}.  More precisely,
if Assumption~\ref{assump:Lipschitz} holds, then
one of the following statements is true for
all initial states $x_0 \in X$ and all inputs $u \in L^{\infty}(\mathbb{R}_+,U)$:
\begin{enumerate}
	\item 
	There exists a unique mild solution of $\Sigma(A,F)$ on
	$\mathbb{R}_+$.
	\item
	There exist  $t_{\max}\in(0,\infty)$ 
	and $x \in C([0,t_{\max}),X)$ such that 
	$x|_{[0,\tau]}$ is a unique mild solution of $\Sigma(A,F)$ 
	on $[0,\tau]$ for all $\tau \in(0, t_{\max})$ 
	and \[
	\lim_{t \uparrow t_{\max}}\|x(t)\| = \infty.
	\]
\end{enumerate}

Throughout this section, we consider only forward complete systems;
see also \cite[p.~1284]{Sontag1996} and \cite{Angeli1999}
for forward completeness.
\begin{definition}
	{\em
	The semi-linear system $\Sigma(A,F)$  is 
	{\em forward complete} if 
	there exists a unique mild solution of 
	$\Sigma(A,F)$ on $\mathbb{R}_+$
	for all $x_0 \in X$ and $u \in \mathcal{U}$.
}
\end{definition}

We denote by
$\phi(t,x_0,u)$ the unique mild solution of 
the forward complete semi-linear
system $\Sigma(A,F)$ with initial state $x_0 \in X$ and input $u \in \mathcal{U}$,
i.e.,
\[
\phi(t,x_0,u) = T(t)x_0 + \int^t_0 T(t-s)  F\big(\phi(s,x_0,u),u(s)\big) \mathrm{d}s
\qquad \forall t \geq 0.
\]
The mild solution satisfies the cocycle property
\begin{equation}
\label{eq:cocyle}
\phi(t+\tau,x_0,u) = \phi\big(t, \phi(\tau,x_0,u),u(\cdot + \tau)\big)
\end{equation}
for all $x_0 \in X$, $u \in \mathcal{U}$, and $t,\tau \geq 0$.

\subsection{Definition of semi-uniform input-to-state stability}
For the forward complete semi-linear system $\Sigma(A,F)$, we introduce
the notion of semi-uniform input-to-state stability.
Before doing so, we recall the definition of uniform global stability; see 
\cite[p.~1285]{Sontag1996} and \cite[Definition~6]{Mironchenko2018}.
\begin{definition}
	{\em
	The semi-linear system $\Sigma(A,F)$ is called 
	{\em 
		uniformly globally stable (UGS)} if 
	the following two conditions hold:
	\begin{enumerate}
		\item
		$\Sigma(A,F)$ is forward complete.
		\item
		There exist 
		$\gamma, \mu \in \mathcal{K}_{\infty}$ such that 
		\begin{equation}
		\label{eq:UGS}
		\|\phi(t,x_0,u)\| \leq 
		\gamma (\|x_0\|) + \mu (\|u\|_{\mathcal{U}})
		\end{equation}
		for all $x_0 \in X$, $u \in \mathcal{U}$, and $t \geq 0$.
	\end{enumerate}
}
\end{definition}

\begin{definition}
	\label{def:SUISS}
	{\em
	The semi-linear system $\Sigma(A,F)$ is called 
	{\em semi-uniformly 
		input-to-state stable (semi-uniformly ISS)} if 
	the following two conditions hold:
	\begin{enumerate}
		\item
		$\Sigma(A,F)$ is UGS.
		\item
		There exist 
		$\kappa \in \mathcal{KL}$ and $\mu \in \mathcal{K}_{\infty}$ such that
		\begin{equation}
		\label{eq:semi_ISS}
		\|\phi(t,x_0,u)\| \leq 
		\kappa (\|x_0\|_A, t) + \mu (\|u\|_{\mathcal{U}})
		\end{equation}
		for all $x_0 \in D(A)$, $u \in \mathcal{U}$, and $t \geq 0$.
	\end{enumerate}
	In particular, if there exists $\alpha >0$ such that for
	all $r > 0$,
	$\kappa(r,t) = O(t^{-1/\alpha})$ as $t \to \infty$,  then $\Sigma(A,F)$ is called 
	{\em polynomially  input-to-state stable (polynomially ISS) with parameter $\alpha >0$}.
}
\end{definition}

Assume that the nonlinear operator $F:X\times U \to X$ satisfies 
$F(\xi,0) = 0$
for all $\xi \in X$.
Then one can easily see that 
if 
the semi-linear system $\Sigma(A,F)$ is 
semi-uniformly (resp. polynomially)
ISS, then $A$ 
generates a semi-uniformly (resp. polynomially) 
stable semigroup $(T(t))_{t\geq0}$ on $X$.
In fact,
$\phi(t,x_0,0) = T(t)x_0$ for all $x_0 \in X$ and $t \geq 0$ by assumption.
Therefore,
$(T(t))_{t\geq 0}$ is uniformly bounded by UGS with $u(t) \equiv 0$.
Take $\xi \in X$ with $\|\xi\| = 1$.
Then
\[
\|AR(1,A)\xi\| \leq 1+\|R(1,A)\|.
\]
Since the inequality  \eqref{eq:semi_ISS} with $u(t) \equiv 0$ yields
\begin{align*}
\|T(t)R(1,A)\xi\| 
&\leq \kappa(\|R(1,A)\xi\|+ \|AR(1,A)\xi\|,t)   \\
&\leq \kappa(1+2\|R(1,A)\|,t) 
\end{align*}
for all $t \geq 0$,
it follows that 
\[
\|T(t)R(1,A)\| \leq \kappa(1+2\|R(1,A)\|,t) \to 0\qquad \text{as $t \to \infty$}.
\]
Hence $(T(t))_{t\geq 0}$ is semi-uniformly stable.
Note that 
semi-uniform stability of $(T(t))_{t\geq 0}$ 
generated by $A$ is equivalent to
$i \mathbb{R} \subset \varrho(A)$ by 
Theorem~\ref{thm:equivalence_SUS}.
A similar calculation shows that
$
\|T(t)A^{-1}\| \leq  \kappa(1+\|A^{-1}\|,t)
$
for all $t \geq 0$. Thus, 
polynomial ISS 
of $\Sigma(A,F)$
implies polynomial stability of $(T(t))_{t\geq 0}$.

We conclude this subsection by giving an example of 
polynomially ISS (but not necessarily uniform ISS)
nonlinear systems.
\begin{example}
	{\em
	Let $X$ and $U$ be Banach spaces.
	Let $A$ be the generator of a polynomially
	stable semigroup $(T(t))_{t \geq 0}$ with parameter $\alpha >0$ on $X$ 
	and let
	$H \in \mathcal{L}(U,X)$ satisfy
	$\ran(H) \subset D((-A)^\beta)$ for some $\beta > \alpha$.
	Assume that
	 $q : \mathbb{R}_+ \to  \mathbb{R}$ satisfies the following conditions:
	\begin{enumerate}
		\item $q(0) = 0$.
		\item For all $r >0$, there exists $K_r >0$ such that 
		\[
		|q(z) - q(w)| \leq K_r |z-w|\qquad \forall z,w \in [0,r].
		\]
		\item
		$\sup_{z \geq 0} |q(z)| < \infty$.
	\end{enumerate}
	Define a 
	nonlinear operator $F:X \times U \to X$ by
	\[
	F(\xi,v) := q (\|\xi\|) Hv,\quad \xi \in X,~v \in U.
	\]
	A routine calculation shows that Assumption~\ref{assump:Lipschitz}
	holds for the nonlinear operator $F$.
	
	We show that $\Sigma(A,F)$ is polynomially ISS with parameter $\alpha$
	for $\mathcal{U}= L^{\infty}(\mathbb{R}_+,U)$.
	By Proposition~\ref{prop:decay_rate}, there is a constant $M>0$ such that 
	\begin{equation*}
		\|T(t)(-A)^{-\beta}\| \leq \frac{M}{(t+1)^{\beta/\alpha}}\qquad 
		\forall t \geq 0.
	\end{equation*}
	Since $(-A)^{\beta}$ is closed and since
	$\ran(H) \subset D((-A)^\beta)$, it follows that
	 $(-A)^{\beta} H \in \mathcal{L}(U,X)$.
	 Let $c :=\sup_{z \geq 0} |q(z)| < \infty$ and $t >0$.
	We obtain
	\begin{align*}
	\left\|
	\int^t_0 T(t-s) F\big(x(s),u(s) \big) \mathrm{d}s 
	\right\| 
	&=
	\left\|
	\int^t_0 T(t-s) (-A)^{-\beta} (-A)^\beta F\big(x(s),u(s) \big) \mathrm{d}s
	\right\| \\
	&\leq \int^t_0 \frac{cM \|(-A)^\beta H\| }{(t-s+1)^{\beta/\alpha}} 
	\|u\|_{\infty} \mathrm{d}s \\
	&\leq \frac{\alpha cM \|(-A)^\beta H\|}{\beta -\alpha} \|u\|_{\infty}
	\end{align*}
	for all $x\in C([0,t],X)$ and
	$u \in L^{\infty}(\mathbb{R}_+,U)$.
	From this estimate, we see that $\Sigma(A,F)$ is polynomially ISS with parameter $\alpha$
	for $\mathcal{U}= L^{\infty}(\mathbb{R}_+,U)$.
}
\end{example}

\subsection{Characterization of semi-uniform input-to-state stability}
We define a semi-uniform version of
the properties of uniform attractivity and strong 
attractivity studied in \cite{Mironchenko2018}. The
attractivity properties
has been originally introduced in \cite[pp.~1284--1285]{Sontag1996}
in order to characterize ISS of ordinary differential equations.
\begin{definition}
	{\em 
	The forward complete 
	semi-linear system $\Sigma(A,F)$ has the
	{\em semi-uniform 
		limit property} if 
	there exists 
	$\mu \in \mathcal{K}_{\infty}$ such that 
	the following statement holds:
	For all $\varepsilon, r >0$, there is
	$\tau = \tau(\varepsilon,r) < \infty$ such that for all $x_0 \in D(A)$,
	\begin{equation*}
	\|x_0\|_A \leq r ~~\wedge~~ u \in \mathcal{U}
	\quad \Rightarrow \quad 
	\exists t \leq \tau :
	\|\phi(t,x_0,u)\| \leq 
	\varepsilon + \mu (\|u\|_{\mathcal{U}}).
	\end{equation*}
}
\end{definition}
\begin{definition}
	{\em
	The forward complete 
	semi-linear system $\Sigma(A,F)$ has the
	{\em semi-uniform 
		asymptotic gain property} if 
	there exists  $\mu \in \mathcal{K}_{\infty}$ such that 
	the following statement holds:
	For all $\varepsilon,r >0$, 
	there is $\tau = \tau(\varepsilon,r) < \infty$ such that for all
	$x_0 \in D(A)$ with $\|x_0\|_A \leq r$
	and all $u \in \mathcal{U}$,
	\[
	t \geq \tau \quad \Rightarrow \quad
	\|\phi(t,x_0,u)\| \leq \varepsilon + \mu(\|u\|_{\mathcal{U}}).
	\]
}
\end{definition}

By definition, the asymptotic gain property is stronger than
the limit property. We will see that 
both  properties are equivalent if the system is UGS. Moreover,
based on
these attractivity properties, 
a characterization of semi-uniform ISS is established.
The attractivity-based characterization of ISS is useful when
the construction of a $\mathcal{KL}$ function $\kappa$ is involved. 
The proof for 
the semi-uniform case 
is obtained by a slight modification of
the proof of Theorem~5 in \cite{Mironchenko2018}
 for the uniform case.
We sketch it for the sake of completeness.
\begin{theorem}
	\label{thm:UGS_PLP}
	The following statements on the semi-linear system $\Sigma(A,F)$ 
	are equivalent:
	\begin{enumerate}
		\renewcommand{\labelenumi}{{\em \arabic{enumi}.}}
		\item
		$\Sigma(A,F)$ is semi-uniformly ISS.
		\item
		$\Sigma(A,F)$  is 
		UGS and has the semi-uniform limit property.
		\item
		$\Sigma(A,F)$ is
		UGS and has the semi-uniform 
		asymptotic gain property.
	\end{enumerate}
\end{theorem}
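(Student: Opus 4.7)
The proof follows the pattern of Theorem~5 in \cite{Mironchenko2018}, with the state norm $\|x_0\|$ replaced by the graph norm $\|x_0\|_A$ in the relevant places. The implications $(1) \Rightarrow (3) \Rightarrow (2)$ are essentially free: for $(1) \Rightarrow (3)$, given $\varepsilon, r > 0$, monotonicity of $\kappa \in \mathcal{KL}$ yields $\tau > 0$ with $\kappa(r,\tau) \leq \varepsilon$, so that for $\|x_0\|_A \leq r$ and $t \geq \tau$ one has $\|\phi(t,x_0,u)\| \leq \kappa(r,\tau) + \mu(\|u\|_{\mathcal{U}}) \leq \varepsilon + \mu(\|u\|_{\mathcal{U}})$, while $(3) \Rightarrow (2)$ follows by specialising $t = \tau$ in the asymptotic gain property.

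The substance lies in $(2) \Rightarrow (1)$, which I would split into two parts. First, I derive the semi-uniform asymptotic gain property from UGS together with the semi-uniform limit property. Let $\gamma, \mu_2 \in \mathcal{K}_\infty$ come from UGS and $\mu_1 \in \mathcal{K}_\infty$ from the limit property. Given $\varepsilon', r > 0$, pick $\varepsilon > 0$ with $\gamma(2\varepsilon) \leq \varepsilon'$ and let $\tau = \tau(\varepsilon,r)$ be the time produced by the limit property. Fix $x_0 \in D(A)$ with $\|x_0\|_A \leq r$ and $u \in \mathcal{U}$; by assumption there is $s \in [0,\tau]$ with $\|\phi(s,x_0,u)\| \leq \varepsilon + \mu_1(\|u\|_{\mathcal{U}})$. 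For every $t \geq \tau \geq s$, the cocycle identity \eqref{eq:cocyle}, the shift-invariance $\|u(\cdot+s)\|_{\mathcal{U}} \leq \|u\|_{\mathcal{U}}$, and UGS give
\[
\|\phi(t,x_0,u)\| = \|\phi(t-s,\phi(s,x_0,u),u(\cdot+s))\| \leq \gamma(\|\phi(s,x_0,u)\|) + \mu_2(\|u\|_{\mathcal{U}}).
\]
Combined with the elementary estimate $\gamma(a+b) \leq \gamma(2a) + \gamma(2b)$, the right-hand side is bounded by $\varepsilon' + \tilde\mu(\|u\|_{\mathcal{U}})$, where $\tilde\mu(r) := \gamma(2\mu_1(r)) + \mu_2(r) \in \mathcal{K}_\infty$, which is exactly semi-uniform asymptotic gain.

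Second, from UGS and the asymptotic gain property just obtained I construct $\kappa \in \mathcal{KL}$ by setting
\[
\omega(r,t) := \sup \bigl\{ \bigl(\|\phi(s,x_0,u)\| - \tilde\mu(\|u\|_{\mathcal{U}})\bigr)^+ : x_0 \in D(A),~ \|x_0\|_A \leq r,~ u \in \mathcal{U},~ s \geq t \bigr\},
\]
where $(\cdot)^+$ denotes the positive part. Then $\omega$ is nondecreasing in $r$, nonincreasing in $t$, bounded by $\gamma(r)$ at $t = 0$ thanks to UGS, and tends to $0$ as $t \to \infty$ for each fixed $r$ thanks to the asymptotic gain property. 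A standard majorisation lemma (as in Lemma~7 of \cite{Mironchenko2018}) then yields $\kappa \in \mathcal{KL}$ with $\omega(r,t) \leq \kappa(r,t)$, so that $\|\phi(t,x_0,u)\| \leq \kappa(\|x_0\|_A,t) + \tilde\mu(\|u\|_{\mathcal{U}})$, which is \eqref{eq:semi_ISS}. The main obstacle will be the first half of $(2) \Rightarrow (1)$: the $\mathcal{K}_\infty$ arithmetic has to be tracked carefully so that the additive decoupling of $\varepsilon'$ and $\tilde\mu(\|u\|_{\mathcal{U}})$ holds uniformly on the whole tail $[\tau,\infty)$ and not only at the single time $s$ supplied by the limit property; once this is in hand, the majorisation step is routine.
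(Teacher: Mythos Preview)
Your proposal is correct and follows essentially the same route as the paper: the cycle is run in the order $(1)\Rightarrow(3)\Rightarrow(2)\Rightarrow(1)$ instead of the paper's $(1)\Rightarrow(2)\Rightarrow(3)\Rightarrow(1)$, but your two-step treatment of $(2)\Rightarrow(1)$ reproduces exactly the paper's $(2)\Rightarrow(3)$ (cocycle plus UGS plus the $\gamma(a+b)\le\gamma(2a)+\gamma(2b)$ trick) and $(3)\Rightarrow(1)$ (majorisation via Lemma~7 of \cite{Mironchenko2018}). The only cosmetic difference is that you package the $\mathcal{KL}$ construction through the envelope $\omega(r,t)$ rather than the paper's explicit dyadic sequence $\varepsilon_n=2^{-n}\gamma(r)$; both invoke the same lemma and yield the same conclusion.
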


\begin{proof} \mbox{}
	[1. $\Rightarrow $ 2.]
	Suppose that $\Sigma(A,F)$
	is semi-uniformly ISS.
	By definition, $\Sigma(A,F)$ is
	UGS.
	There exist $\kappa \in \mathcal{KL}$ 
	and $\mu \in \mathcal{K}_{\infty}$ such that 
	\[
	\|\phi(t,x_0,t)\| \leq \kappa(\|x_0\|_A,t) + \mu(\|u\|_{\mathcal{U}})
	\]
	for all $x_0 \in D(A)$, $u \in \mathcal{U}$, and $t \geq 0$.
		Take $\varepsilon,r >0$.
	We obtain
	$\kappa(r,\tau) \leq \varepsilon$ for some
	$\tau = \tau(\varepsilon ,r) < \infty$. Therefore,
	if $x_0 \in D(A)$ satisfies $\|x_0\|_A \leq r$, then
	\[
	\|\phi(\tau ,x_0,t)\| \leq \varepsilon + \mu(\|u\|_{\mathcal{U}})\qquad
	\forall u \in  \mathcal{U}.
	\]
	Thus, $\Sigma(A,F)$
	has the semi-uniform limit property.
	
	[2. $\Rightarrow $ 3.] 
	Suppose that $\Sigma(A,F)$ is 
	UGS and has the semi-uniform 
	limit property.
	By assumption, there exist 
	$\gamma, \mu \in \mathcal{K}_{\infty}$ such that the following statement 
	holds:
	For every $\varepsilon,r >0$, 
	there is $\tau = \tau(\varepsilon,r) < \infty$ such that for all
	$x_0 \in D(A)$,
	\begin{align*}
	 \|x_0\|_A \leq r ~~\wedge~~ u \in \mathcal{U}
	 \quad \Rightarrow \quad 
	 \exists t_1 \leq \tau:
	\|\phi(t_1,x_0,u)\| \leq \varepsilon + \mu(\|u\|_{\mathcal{U}}),
	\end{align*}
	and for all $s \geq 0$,
	\begin{align*}
	\big\|\phi\big(s, \phi(t_1,x_0,u),u(\cdot+t_1)\big)\big\| \leq \gamma(\|\phi(t_1,x_0,u)\|)  +\mu(\|u(\cdot+t_1)\|_{\mathcal{U}}).
	\end{align*}
	Using the cocycle property \eqref{eq:cocyle}, we obtain
	\begin{align*}
		\|\phi(t_1+s,x_0,u)\| &= 
		\big\|\phi\big(s, \phi(t_1,x_0,u),u(\cdot+t_1) \big) \big\| \\
		&\leq \gamma \big(\varepsilon + \mu(\|u\|_{\mathcal{U}}) \big) 
		+ \mu(\|u(\cdot+t_1)\|_{\mathcal{U}}).
	\end{align*}
	Since $\gamma(a+b) \leq \gamma(2a) + \gamma(2b)$ for all $a,b \geq 0$,
	it follows that
	\begin{align*}
	\|\phi(t_1+s,x_0,u)\| &\leq 
	\gamma(2\varepsilon) + \gamma\big(2\mu(\|u\|_{\mathcal{U}}) \big) + \mu(\|u\|_{\mathcal{U}}) \\
	&\leq 
	\gamma(2\varepsilon) +  \widetilde \mu(\|u\|_{\mathcal{U}}),
	\end{align*}
	where $\widetilde \mu := 
	\gamma \circ (2\mu) +\mu \in \mathcal{K}_{\infty}$. 
	
	Choose
	$\widetilde \varepsilon, \widetilde r>0$ arbitrarily and set
	\[
	\varepsilon := \frac{\gamma^{-1}(\widetilde \varepsilon)}{2},\quad
	r:= \widetilde r.
	\]
	We have shown that
	there is
	\[
	\tau = \tau(\varepsilon,r) = \tau\left(\frac{\gamma^{-1}
		(\widetilde \varepsilon)}{2},\widetilde r \right)
	< \infty
	\]
	such that for all $x_0 \in D(A)$ with 
	$\|x_0\|_A \leq \widetilde r$ and all $u \in \mathcal{U}$,
	\[
	t \geq \tau \quad \Rightarrow \quad
	\|\phi (t,x_0,u) \| \leq \widetilde \varepsilon + \widetilde \mu (\|u\|_\mathcal{U}).
	\]
	Hence $\Sigma(A,F)$  has the semi-uniform 
	asymptotic gain property.

	[3. $\Rightarrow $ 1.]
	Suppose that $\Sigma(A,F)$ is 
	UGS and has the semi-uniform 
	asymptotic gain property.
	There exist $\gamma,\mu \in \mathcal{K}_{\infty}$ such that
	the following two properties hold:
	\begin{enumerate}
		\renewcommand{\labelenumi}{(\alph{enumi})}
		\item
		For all $x_0 \in X$, $u \in \mathcal{U}$, and $t \geq 0$,
		\begin{equation}
		\label{eq:UGS_proof}
		\|\phi(t,x_0,u)\| \leq 
		\gamma (\|x_0\|) + \mu (\|u\|_\mathcal{U}).
		\end{equation}
		
		\item
		For all $\varepsilon,r >0$, 
		there is $\tau = \tau(\varepsilon,r) < \infty$ such that 
		for all $x_0 \in D(A)$ with $\|x_0\|_A \leq r$
		and all $u  \in \mathcal{U}$,
		\begin{equation}
		\label{eq:SUAG}
		t \geq \tau \quad \Rightarrow \quad
		\|\phi(t,x_0,u)\| \leq \varepsilon + \mu(\|u\|_\mathcal{U}).
		\end{equation}
	\end{enumerate}

	Let $r >0$. Set
	$\varepsilon_n := 2^{-n}\gamma(r)$ for $n \in \mathbb{N}_0$
	and $\tau_0 := 0$.
	By the property \eqref{eq:SUAG},
	there exist
	$\tau_n = \tau_n(\varepsilon_n,r)$, $n \in \mathbb{N}$, such that 
	for all $x_0 \in D(A)$ with $\|x_0\|_A \leq r$
	and all $u  \in \mathcal{U}$,
	\begin{equation}
	\label{eq:taun_bound}
	t \geq \tau_n \quad \Rightarrow \quad
	\|\phi(t,x_0,u)\| \leq \varepsilon_n + \mu(\|u\|_\mathcal{U}).
	\end{equation}
	For $n=0$, we also obtain \eqref{eq:taun_bound} 
	by the property \eqref{eq:UGS_proof} and
	the inequality
	\[
	\gamma(\|x_0\|) \leq \gamma(\|x_0\|_A) \leq \gamma(r) = \varepsilon_0.
	\]
	We may assume
	without loss of generality that 
	$
	\inf_{n \in \mathbb{N}} (\tau_n - \tau_{n-1}) >0.
	$ 
	For  these 
	sequences $(\varepsilon_n)_{n \in \mathbb{N}_0}$ 
	and $(\tau_n)_{n \in \mathbb{N}_0}$, one can construct
	a function $\kappa \in \mathcal{KL}$ satisfying
	\begin{equation}
	\label{eq:eps_bound}
	\varepsilon_n 
	\leq \kappa(r,t) \qquad 
	\forall t \in [\tau_n,\tau_{n+1}),~\forall n \in \mathbb{N}_0;
	\end{equation}
	see	the proof of Lemma~7 of \cite{Mironchenko2018} for the detailed 
	construction. From \eqref{eq:taun_bound} and \eqref{eq:eps_bound},
	we have that for all $x_0 \in D(A)$ with $\|x_0\|_A \leq r$
	and all $u  \in \mathcal{U}$,
	\begin{equation}
	\label{eq:phi_beta_bound}
	\|\phi(t,x_0,u)\| \leq \kappa(r,t)+ \mu(\|u\|_\mathcal{U})\qquad \forall t \geq 0.
	\end{equation}

	Take $x_0 \in D(A)$ and  
	$u \in \mathcal{U}$ arbitrarily.
	If $\|x_0\|_A = 0$, then the property \eqref{eq:UGS_proof} yields
	\[
	\|\phi(t,x_0,u)\| \leq  \mu (\|u\|_\mathcal{U}) =
	\kappa(\|x_0\|_A,t)+ \mu(\|u\|_\mathcal{U})
	\qquad \forall t \geq 0.
	\]
	If $\|x_0\|_A >0$, then 
	it follows from
	\eqref{eq:phi_beta_bound} with $r:= \|x_0\|_A$ that
	\[
	\|\phi(t,x_0,u)\| \leq \kappa(\|x_0\|_A,t)+ \mu(\|u\|_\mathcal{U}) \qquad \forall t \geq 0.
	\]
	Thus, $\Sigma(A,F)$ is semi-uniformly
	ISS.
	\qed
\end{proof}

\subsection{Relation between semi-uniform input-to-state stability and 
	strong input-to-stability}

After recalling the notion of strong input-to-state stability introduced in \cite[Definition~13]{Mironchenko2018},
we study its relation to semi-uniform ISS
with the help of the characterization in Theorem~\ref{thm:UGS_PLP}.

\begin{definition}
	{\em
	The semi-linear system $\Sigma(A,F)$ is {\em strongly input-to-state 
		stable (strongly ISS)} if 
	$\Sigma(A,F)$ is forward complete and if
	there exist $\gamma, \mu \in \mathcal{K}_{\infty}$
	and $\kappa : X \times \mathbb{R}_+ \to \mathbb{R}_+$ such that 
	the following three conditions hold:
	\begin{enumerate}
		\item $\kappa(x_0,\cdot) \in \mathcal{L}$ for all $x_0 \in X$ with $x_0\not=0$.
		\item $\kappa(x_0,t) \leq \gamma(\|x_0\|)$ for all $x_0 \in X$ and $t \geq 0$.
		\item $\|\phi(t,x_0,u)\| \leq \kappa(x_0,t) + \mu (\|u\|_{\mathcal{U}})$
		for all $x_0 \in X$, $u \in \mathcal{U}$, and $t \geq 0$.
	\end{enumerate}
}
\end{definition}

For some special classes of semi-linear systems,
semi-uniform ISS implies strong ISS.
\begin{theorem}
	\label{thm:semi_to_strong}
	Assume that the operator $F$ of $\Sigma(A,F)$ satisfies one of the following 
	conditions: 
	\begin{enumerate}
	\renewcommand{\labelenumi}{{\em \arabic{enumi}.}}
	\item
	There exists $B \in \mathcal{L}(U,X)$ such that 
	$F(\xi,v) = Bv$ for all $\xi \in X$ and $v \in U$.
	\item 
	$F(\xi-\zeta,v) = F(\xi,v) - F(\zeta,v) $ for all  $\xi,\zeta \in X$ and $v \in U$.
	\end{enumerate}
	Then
	semi-uniform ISS implies
	 strong ISS
	for $\Sigma(A,F)$.
\end{theorem}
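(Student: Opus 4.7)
The plan is to extend the semi-uniform ISS bound, which holds only for initial states in $D(A)$, to all of $X$ by exploiting the special structure of $F$ in each case, and then smooth the resulting time-decay into the class $\mathcal{L}$.

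In Case 1, where $F(\xi,v) = Bv$, the mild solution decomposes affinely as $\phi(t,x_0,u) = T(t)x_0 + \phi(t,0,u)$. Since $F(\xi,0)\equiv 0$, the remark following Definition~\ref{def:SUISS} applies, so $(T(t))_{t\geq 0}$ is semi-uniformly stable; combined with uniform boundedness of $(T(t))$ and density of $D(A) = \ran R(1,A)$ in $X$, an $\varepsilon/3$-argument promotes $\|T(t)R(1,A)\|\to 0$ to the strong decay $\|T(t)x_0\|\to 0$ for every $x_0 \in X$. UGS with zero initial state bounds the input response: $\|\phi(t,0,u)\| \leq \mu_0(\|u\|_{\mathcal{U}})$. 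Combining yields $\|\phi(t,x_0,u)\|\leq \|T(t)x_0\| + \mu_0(\|u\|_{\mathcal{U}})$ for all $x_0 \in X$ and $u \in \mathcal{U}$.

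In Case 2, setting $\xi=\zeta$ in the additivity hypothesis gives $F(0,v)=0$ for all $v$, and uniqueness of the mild solution then forces $\phi(t,0,u)=0$. Comparing integral equations shows that $z(t) := \phi(t,x_0,u) + \phi(t,y_0,u)$ satisfies the defining equation of $\Sigma(A,F)$ with initial state $x_0+y_0$ and input $u$, hence coincides with $\phi(t,x_0+y_0,u)$ by uniqueness; the solution map is additive in the initial state. For $x_0 \in X$ and any $y\in D(A)$, additivity and the triangle inequality give
\begin{equation*}
\|\phi(t,x_0,u)\| \leq \kappa_0(\|y\|_A,t) + \gamma_0(\|x_0-y\|) + 2\mu_0(\|u\|_{\mathcal{U}}),
\end{equation*}
applying semi-uniform ISS to $\phi(t,y,u)$ and UGS to $\phi(t,x_0-y,u)$. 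Taking the infimum over $y \in D(A)$ defines
\begin{equation*}
\eta(x_0,t) := \inf_{y \in D(A)} \bigl[\kappa_0(\|y\|_A,t) + \gamma_0(\|x_0-y\|)\bigr],
\end{equation*}
which is non-increasing in $t$, bounded by $\gamma_0(\|x_0\|)$ (take $y=0$), and tends to $0$ as $t\to\infty$ (first choose $y$ with $\gamma_0(\|x_0-y\|) < \varepsilon/2$, then $\kappa_0(\|y\|_A,t) < \varepsilon/2$ for large $t$). Hence $\|\phi(t,x_0,u)\| \leq \eta(x_0,t) + 2\mu_0(\|u\|_{\mathcal{U}})$.

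The main technical hurdle is upgrading the natural bound, $t \mapsto \|T(t)x_0\|$ in Case 1 or $t\mapsto \eta(x_0,t)$ in Case 2, to a function in the class $\mathcal{L}$: it is non-increasing with limit $0$ but need not be strictly decreasing, and in Case 2 is only upper semi-continuous as the infimum of the continuous functions $t \mapsto \kappa_0(\|y\|_A,t) + \gamma_0(\|x_0-y\|)$. A standard smoothing (for instance, a piecewise-linear majorant through sample values $(n,\, f(n-1) + 1/n)$, augmented with a strictly decreasing exponential tail proportional to $\gamma_0(\|x_0\|)$) produces a continuous strictly decreasing dominating function bounded by a constant multiple of $\gamma_0(\|x_0\|)$. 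Taking $\mu := 2\mu_0$ and $\gamma$ proportional to $\gamma_0$ then delivers the required $\kappa(x_0,\cdot)\in\mathcal{L}$ with $\kappa(x_0,t)\leq \gamma(\|x_0\|)$ and $\|\phi(t,x_0,u)\| \leq \kappa(x_0,t) + \mu(\|u\|_{\mathcal{U}})$, establishing strong ISS in both cases.
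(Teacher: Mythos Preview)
Your overall strategy is correct and yields a valid proof, but it follows a different route from the paper. The paper does not construct $\kappa(x_0,\cdot)$ directly; instead it invokes the characterization from \cite[Theorem~12]{Mironchenko2018} that strong ISS is equivalent to UGS together with the \emph{strong asymptotic gain property}, and then verifies only the latter. In Case~1 this reduces (as in your argument) to strong stability of $(T(t))_{t\ge 0}$ plus UGS with zero initial state. In Case~2 the paper first passes through Theorem~\ref{thm:UGS_PLP} to obtain the semi-uniform asymptotic gain property, then approximates $x_0$ by $y_0\in D(A)$ and combines the two estimates exactly as you do, obtaining the strong asymptotic gain property as an $\varepsilon$--$\tau$ statement. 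The payoff of the paper's route is that the smoothing step---your ``main technical hurdle''---disappears entirely, since the construction of $\kappa$ is absorbed into the cited characterization. Your route is more self-contained and makes the function $\kappa$ explicit, at the cost of that extra technical work.

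Two small but genuine slips in your write-up need fixing. First, $t\mapsto\|T(t)x_0\|$ is in general \emph{not} non-increasing for a merely bounded $C_0$-semigroup; you should first pass to the non-increasing envelope $\sup_{s\ge t}\|T(s)x_0\|$, which is still bounded by $\gamma_0(\|x_0\|)$ via UGS and still tends to zero. Second, the piecewise-linear majorant through $(n,\,f(n-1)+1/n)$ is bounded only by $f(0)+1$, which is not a constant multiple of $\gamma_0(\|x_0\|)$ as $\|x_0\|\to 0$; the additive offset must scale with the initial state, e.g.\ use $f(n-1)+\gamma_0(\|x_0\|)/n$, or more simply take $\kappa(x_0,t):=\big(\text{non-increasing envelope}\big)+\gamma_0(\|x_0\|)e^{-t}$, which is continuous, strictly decreasing, tends to zero, and is bounded by $2\gamma_0(\|x_0\|)$.
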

\begin{proof}
		By Theorem~12 of \cite{Mironchenko2018},
	the semi-linear system $\Sigma(A,F)$ is strongly ISS
	if and only if $\Sigma(A,F)$
	is UGS and has the strong asymptotic gain property, which means that 
	there exists $\mu \in \mathcal{K}_{\infty}$ such that 
	the following statement holds:
	For all $\varepsilon>0$
	and $x_0 \in X$, there exists $\tau = \tau(\varepsilon,x_0)<\infty$
	such that for all $u \in \mathcal{U}$,
	\begin{equation}
	\label{eq:SAGP}
	 t \geq \tau\quad \Rightarrow \quad 
	\|\phi(t,x_0,u)\| \leq 
	\varepsilon + \mu (\|u\|_{\mathcal{U}}).
	\end{equation}
	It suffices to show that 
	semi-uniform
	ISS implies
	the strong asymptotic gain property.
	
	1. Assume that there exists $B \in \mathcal{L}(U,X)$ such that 
	$F(\xi,v) = Bv$ for all $\xi \in X$ and $v \in U$.
	By linearity, $
	\phi(t,x_0,u) = \phi(t,x_0,0) + \phi(t,0,u) 
	$ for all 
	$x_0 \in X$, $u \in \mathcal{U}$, and $t \geq 0$.
	Since $\Sigma(A,F)$ is semi-uniformly ISS, 
	$(T(t))_{t\geq 0}$ is uniformly bounded and 
	$\lim_{t\to \infty}T(t)x_0 = 0$ as $t \to \infty$ for all $x_0 \in D(A)$.
	Hence
	strong stability of $(T(t))_{t\geq 0}$ follows by the density of 
	$D(A)$; see also Proposition~A.3 of \cite{Engel2000}. 
	For all $\varepsilon >0$ and $x_0 \in X$,
	there exists $\tau = \tau(\varepsilon,x_0)<\infty$ such that 
	\[
	\|\phi(t,x_0,0)\| = \|T(t)x_0\| \leq \varepsilon\qquad \forall t \geq \tau.
	\]
	Since $\Sigma(A,F)$ is UGS, there exists 
	$\mu \in \mathcal{K}_{\infty}$ such that 
	\[
	\|\phi(t,0,u)\| \leq  \mu (\|u\|_{\mathcal{U}})
	\qquad \forall u \in \mathcal{U},~\forall t \geq 0.
	\]
	Thus, $\Sigma(A,F)$ has the strong asymptotic gain property.

	2. 	Assume that  
	$F(\xi-\zeta,v) = F(\xi,v) - F(\zeta,v) $ for all  $\xi,\zeta \in X$ and $v \in U$.
	Since $\Sigma(A,F)$  is 
	UGS and has the semi-uniform asymptotic gain property by Theorem~\ref{thm:UGS_PLP},
	there exist $\gamma,\mu \in \mathcal{K}_{\infty}$ such that 
	the following two properties hold:
	\begin{enumerate}
		\renewcommand{\labelenumi}{(\alph{enumi})}
		\item
		For all $x_0 \in X$, $u \in \mathcal{U}$, and $t \geq 0$,
		\begin{equation}
		\label{eq:UGS_proof2}
		\|\phi(t,x_0,u)\| \leq 
		\gamma (\|x_0\|) + \mu (\|u\|_\mathcal{U}).
		\end{equation}
		
		\item
		For all $\varepsilon, r >0$, there exists
		$\tau = \tau(\varepsilon,r) < \infty$ such that for all $x_0 \in D(A)$
		with $\|x_0\|_A \leq r$ and all $u \in \mathcal{U}$,
		\begin{equation}
		\label{eq:SULIM_proof2}
		t \geq \tau\quad \Rightarrow \quad 
		\|\phi(t,x_0,u)\| \leq 
		\varepsilon + \mu (\|u\|_{\mathcal{U}}).
		\end{equation}
	\end{enumerate}

	Take $\varepsilon>0$ and $x_0 \in X$.
	There exists $y_0 \in D(A)$ such that $\|x_0 - y_0\| \leq \gamma^{-1}(\varepsilon/2)$.
	By assumption, for all $u \in \mathcal{U}$,
	$\phi(t,x_0,u)-\phi(t,y_0,u)$ is the mild solution of $\Sigma(A,F)$ with initial state $x_0 -y_0$
	and input $u$. Therefore, the property \eqref{eq:UGS_proof2} implies that 
	\begin{align}
		\|\phi(t,x_0,u)- \phi(t,y_0,u)\| 
		&= \|\phi(t,x_0 - y_0,u)\| \notag \\
		&\leq \gamma(\|x_0 - y_0\|) + \mu(\|u\|_{\mathcal{U}}) \notag \\
		&=\frac{\varepsilon}{2} + \mu(\|u\|_{\mathcal{U}})
		\label{eq:phi_diff}
	\end{align}
	for  all $u \in \mathcal{U}$ and $t \geq 0$.
	
	Since $y_0 \in D(A)$, 
	it follows from the property \eqref{eq:SULIM_proof2} that
	in the case $y_0 \not=0$,
	there exists $\tau = \tau(\varepsilon,\|y_0\|_A) <\infty$ such that 
	for all $u \in \mathcal{U}$,
	\begin{equation}
	\label{eq:y_0_conv}
	 t \geq \tau  \quad \Rightarrow \quad 
	\|\phi(t,y_0,u)\| \leq \frac{\varepsilon}{2} + \mu(\|u\|_{\mathcal{U}}).
	\end{equation}
	In the case $y_0 = 0$, 
	the property \eqref{eq:UGS_proof2} yields that \eqref{eq:y_0_conv} holds 
	with $\tau = 0$.
	Combining the estimates \eqref{eq:phi_diff} and \eqref{eq:y_0_conv},
	we obtain
	\[
	t \geq \tau  \quad \Rightarrow \quad 
	\|\phi(t,x_0,u)\| \leq \varepsilon + \widetilde \mu (\|u\|_{\mathcal{U}})
	\]
	for all $u \in \mathcal{U}$, where $\widetilde \mu  := 2\mu \in \mathcal{K}_{\infty}$.
	Since $y_0$ depends only on $\varepsilon$ and $x_0$, it follows that 
	$\varepsilon$ and $x_0$ determine $\tau = \tau(\varepsilon,\|y_0\|_A) $.
	Thus, $\Sigma(A,F)$ has the strong 
	asymptotic gain property.
	\qed
\end{proof}

Suppose that $\Sigma(A,F)$ is strong ISS. 
If the input $u \in \mathcal{U}$ satisfies 
\[
\lim_{\tau \to \infty}\|u(\cdot + \tau)\|_{\mathcal{U}} \to 0,
\]
then $\|\phi(t,x_0,u)\| \to 0$ as $t \to \infty$ for all $x_0 \in X$;
see Lemma~2.5 of \cite{Schmid2019}, where this convergence result
has been proved under a weaker assumption. We obtain 
a convergence property of semi-uniform ISS
as a corollary
of Theorem~\ref{thm:semi_to_strong}.
\begin{corollary}
	Under the same assumption on the operator $F$ as in Theorem~\ref{thm:semi_to_strong},
	if  $\Sigma(A,F)$ is semi-uniformly
	ISS, then $\|\phi(t,x_0,u)\| \to 0$ as $t \to \infty$
	for all $x_0 \in X$ and all  $u \in \mathcal{U}$ satisfying 
	$\|u(\cdot+\tau)\|_{\mathcal{U}} \to 0$ as $\tau \to \infty$.
\end{corollary}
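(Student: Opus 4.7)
The plan is to chain together two ingredients already stated just above the corollary: the implication from semi-uniform ISS to strong ISS given by Theorem~\ref{thm:semi_to_strong}, and the convergence property of strong ISS under asymptotically vanishing inputs recorded via Lemma~2.5 of \cite{Schmid2019}. There is essentially no new work to do; the corollary is a direct concatenation, so the main task is simply to check that the hypotheses of each step are satisfied.

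First I would observe that the assumption on $F$ in the corollary is exactly the hypothesis of Theorem~\ref{thm:semi_to_strong} (either $F(\xi,v)=Bv$ for some $B\in\mathcal{L}(U,X)$, or $F(\cdot,v)$ is additive in its first argument). Since $\Sigma(A,F)$ is semi-uniformly ISS by assumption, Theorem~\ref{thm:semi_to_strong} applies and yields that $\Sigma(A,F)$ is strongly ISS. In particular, there exist $\gamma,\mu \in \mathcal{K}_\infty$ and $\kappa:X\times \mathbb{R}_+\to \mathbb{R}_+$ with $\kappa(x_0,\cdot)\in \mathcal{L}$ for every nonzero $x_0$ and with
\[
\|\phi(t,x_0,u)\| \leq \kappa(x_0,t) + \mu(\|u\|_{\mathcal{U}})
\]
for all $x_0\in X$, $u\in\mathcal{U}$, and $t\geq 0$.

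Next, I would invoke Lemma~2.5 of \cite{Schmid2019}, which states that for a strongly ISS system $\Sigma(A,F)$ and any input $u\in\mathcal{U}$ with $\|u(\cdot+\tau)\|_{\mathcal{U}}\to 0$ as $\tau\to\infty$, the trajectory satisfies $\|\phi(t,x_0,u)\|\to 0$ as $t\to \infty$ for every $x_0\in X$. Applying this to our situation gives exactly the desired conclusion. The underlying mechanism, should one wish to expand it, is the cocycle property \eqref{eq:cocyle}: writing $\phi(t+\tau,x_0,u)=\phi(t,\phi(\tau,x_0,u),u(\cdot+\tau))$ and using UGS to bound $\phi(\tau,x_0,u)$ uniformly in $\tau$, the strong ISS estimate lets one first choose $\tau$ so that $\mu(\|u(\cdot+\tau)\|_{\mathcal{U}})$ is small and then $t$ so that $\kappa(\phi(\tau,x_0,u),t)$ is small, the latter using only that $\kappa(\cdot,t)$ is pointwise decaying for every nonzero argument.

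There is no genuine obstacle here; the only mild subtlety is handling the trivial case $x_0=0$ (directly from UGS) and noting that the $\mathcal{L}$-property of $\kappa(\phi(\tau,x_0,u),\cdot)$ is pointwise in its first argument, which is why one works sequentially in $\tau$ and then $t$ rather than trying to get uniform-in-$x_0$ decay. Since both of these issues are already dealt with in the proof of Lemma~2.5 of \cite{Schmid2019}, the corollary follows at once.
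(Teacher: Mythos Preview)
Your proposal is correct and matches the paper's own argument exactly: the paper states the corollary as an immediate consequence of Theorem~\ref{thm:semi_to_strong} combined with the convergence property of strong ISS recorded via Lemma~2.5 of \cite{Schmid2019}. No additional work is needed beyond the concatenation you describe.
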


\section{Polynomial input-to-state stability of linear systems}
\label{sec:PolyISS}
In this section, 
we focus on polynomial ISS of linear systems for $\mathcal{U} = 
L^{\infty}(\mathbb{R}_+,U)$.
First,
we give a sufficient condition for general linear systems to be polynomially ISS.
Next, we consider linear systems with diagonalizable generators and
finite-rank input operators and refine the sufficient condition.
Finally, a necessary and sufficient condition for polynomial ISS
is presented in the case where the eigenvalues of
the diagonalizable generator near
the imaginary axis have uniformly separated imaginary parts.

\subsection{Polynomial input-to-state stability for general linear systems}
Let $X$ and $U$ be Banach spaces.
Consider a linear system  with state space $X$
and input space $U$:
\begin{align*}
	\Sigma_{\mathrm{lin}}(A,B) \hspace{10pt}
	\begin{cases}	
		\dot x(t) = Ax(t) + Bu(t),\quad t\geq 0\\ 
		x(0) = x_0,
	\end{cases}
\end{align*}
where 
$A$ is the generator of a $C_0$-semigroup $(T(t))_{t\geq 0}$ on $X$, 
$B \in \mathcal{L}(U,X)$ is an input operator, $x_0 \in X$ is an initial state, and
$u \in L^{\infty}(\mathbb{R}_+,U)$ is an input. 

To study ISS of linear systems, we employ
the notion of admissibility  studied in
the seminal work \cite{Weiss1989}.
\begin{definition}
	\label{def:admissible}
	{\em
	We call the operator $B \in \mathcal{L}(U,X)$ 
	{\em infinite-time $L^{\infty}$-admissible} for a $C_0$-semigroup 
	$(T(t))_{t\geq 0}$ on $X$
	if there exists a constant $c >0$ such that 
	\[
	\left\|
	\int^t_0 T(s)Bu(s) \mathrm{d}s
	\right\| \leq c \|u\|_{\infty}
	\]
	for all $u \in L^{\infty}(\mathbb{R}_+,U)$ and $t \geq 0$.
}
\end{definition}
Let $(T(t))_{t\geq 0}$ be a $C_0$-semigroup on $X$ and 
let $B \in \mathcal{L}(U,X)$. If there exists
$\mu \in \mathcal{K}_{\infty}$ such that
\[
\left\|
\int^t_0 T(t-s)Bu(s)\mathrm{d}s
\right\| \leq \mu(\|u\|_{\infty})\qquad \forall u \in L^{\infty}(\mathbb{R}_+,U),~
\forall t \geq 0,
\]
then 
\begin{align*}
	\left\|
	\int^t_0 T(s)Bu(s)\mathrm{d}s
	\right\| &=
	\left\|
	\int^t_0 T(s)B \frac{u(s)}{\|u\|_{\infty}}\mathrm{d}s 
	\right\| \|u\|_{\infty}  \leq \mu (1) \|u\|_{\infty}
\end{align*}
for  all $u \in L^{\infty}(\mathbb{R}_+,U) \setminus \{0\}$ and 
$t \geq 0$.
Hence $B$ is infinite-time $L^{\infty}$-admissible for  $(T(t))_{t\geq 0}$.

As in the case of strong ISS \cite[Proposition~1]{Nabiullin2018},
polynomial ISS for $\mathcal{U} = 
L^{\infty}(\mathbb{R}_+,U)$ 
is equivalent to the combination of 
polynomial stability of $C_0$-semigroups and 
infinite-time $L^{\infty}$-admissibility of input operators.
\begin{lemma}
	\label{lem:poly_ISS_admissible}
	Let $X$ and $U$ be Banach spaces. 
	The linear system $\Sigma_{\mathrm{lin}}(A,B)$ 
	is polynomially ISS with parameter $\alpha >0$  for $\mathcal{U}= L^{\infty}(\mathbb{R}_+,U)$ if and only if the $C_0$-semigroup
	$(T(t))_{t\geq 0}$ on $X$
	generated by $A$
	is polynomially stable with parameter $\alpha$ and the input operator
	$B\in \mathcal{L}(U,X)$ is infinite-time $L^{\infty}$-admissible for $(T(t))_{t\geq 0}$.
\end{lemma}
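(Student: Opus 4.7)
The plan is to work with the explicit mild solution
\[
\phi(t,x_0,u) = T(t)x_0 + \int_0^t T(t-s)Bu(s)\,\mathrm{d}s,
\]
and to treat the homogeneous and inhomogeneous parts separately in each direction of the equivalence.

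For the forward implication, assume $\Sigma_{\mathrm{lin}}(A,B)$ is polynomially ISS with parameter $\alpha$, witnessed by $\kappa \in \mathcal{KL}$ and $\mu \in \mathcal{K}_\infty$. Polynomial stability of $(T(t))_{t\geq 0}$ is then immediate from the computation already carried out after Definition~\ref{def:SUISS}: setting $u \equiv 0$, UGS gives uniform boundedness, and the estimate $\|T(t)A^{-1}\| \leq \kappa(1+\|A^{-1}\|,t)$ together with the assumed decay $\kappa(r,t) = O(t^{-1/\alpha})$ forces $\|T(t)A^{-1}\| = O(t^{-1/\alpha})$. For admissibility, I would specialize to $x_0 = 0$, where $\|x_0\|_A = 0$ and hence $\kappa(0,t) = 0$ because $\kappa(\cdot,t) \in \mathcal{K}$; the ISS bound then reduces to $\|\int_0^t T(t-s)Bu(s)\,\mathrm{d}s\| \leq \mu(\|u\|_\infty)$, which is precisely the hypothesis of the scaling argument already displayed in the paragraph following Definition~\ref{def:admissible}, yielding infinite-time $L^\infty$-admissibility of $B$ with constant $\mu(1)$.

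For the converse, suppose $(T(t))_{t\geq 0}$ is polynomially stable with parameter $\alpha$ and $B$ is infinite-time $L^\infty$-admissible. Semi-uniform stability together with Theorem~\ref{thm:equivalence_SUS} gives $0 \in \varrho(A)$, and combining the polynomial decay for large $t$ with uniform boundedness of $(T(t))_{t\geq 0}$ for small $t$ produces a constant $C>0$ with $\|T(t)A^{-1}\| \leq C(1+t)^{-1/\alpha}$ for all $t \geq 0$. For $x_0 \in D(A)$ I would write $T(t)x_0 = T(t)A^{-1}(Ax_0)$ to obtain $\|T(t)x_0\| \leq C(1+t)^{-1/\alpha}\|x_0\|_A$, so that $\kappa(r,t) := Cr(1+t)^{-1/\alpha}$ belongs to $\mathcal{KL}$ with the required polynomial rate. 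For the convolution term, the change of variables $s \mapsto t-s$ (applied to the time-reversed, zero-extended input, whose $L^\infty$ norm is bounded by $\|u\|_\infty$) reduces the bound on $\|\int_0^t T(t-s)Bu(s)\,\mathrm{d}s\|$ to the admissibility estimate, giving $\|\int_0^t T(t-s)Bu(s)\,\mathrm{d}s\| \leq c\|u\|_\infty$; with $\mu(r) := cr \in \mathcal{K}_\infty$, the triangle inequality delivers the semi-uniform ISS estimate on $D(A)$, while UGS on all of $X$ follows by replacing the bound on the homogeneous part with $\|T(t)x_0\| \leq M_0\|x_0\|$, where $M_0 := \sup_{t \geq 0}\|T(t)\|$.

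I do not anticipate any serious obstacle; the whole argument is essentially bookkeeping once the factorization $T(t)x_0 = T(t)A^{-1}(Ax_0)$ is used to exploit membership in $D(A)$, and the time-reversal identity is used to relate the convolution to the admissibility integral. The most delicate point is verifying that the candidate $\kappa(r,t) = Cr(1+t)^{-1/\alpha}$ actually lies in $\mathcal{KL}$ and encodes the correct polynomial rate $1/\alpha$, which follows directly from its closed form.
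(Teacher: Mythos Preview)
Your proposal is correct and follows essentially the same route as the paper: the forward direction is handled by the remarks after Definition~\ref{def:SUISS} (polynomial stability) and after Definition~\ref{def:admissible} (the scaling argument giving admissibility from the $\mu$-bound at $x_0=0$), and the converse is the single estimate $\|\phi(t,x_0,u)\| \leq M(t+1)^{-1/\alpha}\|x_0\|_A + c\|u\|_\infty$. The paper simply asserts this estimate in one line, whereas you spell out the factorization $T(t)x_0 = T(t)A^{-1}(Ax_0)$, the time-reversal reducing the convolution to the admissibility integral, and the separate verification of UGS on all of $X$; these are exactly the ingredients the paper is suppressing, so there is no genuine difference in approach.
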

\begin{proof}
	By the remarks following Definitions~\ref{def:SUISS} and 
	\ref{def:admissible}, polynomial ISS of 
	$\Sigma_{\mathrm{lin}}(A,B)$ for $\mathcal{U} = 
	L^{\infty}(\mathbb{R}_+,U)$ implies 
	polynomial stability of $(T(t))_{t\geq 0}$ and infinite-time $L^{\infty}$-admissibility
	of $B$.
	The converse implication immediately follows,
	since there exist constants $M,c>0$ such that 
	\[
	\|\phi(t,x_0,u)\| 
	\leq \frac{M\|x_0\|_A}{(t+1)^{1/\alpha}} + c\|u\|_{\infty}
	\]
	for all $x_0 \in D(A)$, $u \in L^{\infty}(\mathbb{R}_+,U)$, and $t \geq 0$.
	\qed
\end{proof}

We provide a simple sufficient condition for $\Sigma_{\rm lin}(A,B)$
to be polynomially ISS,
by restricting the range of the input operator $B$.
\begin{proposition}
	\label{prop:polyiss}
	Let $X$ and $U$ be Banach spaces. Suppose that 
	$A$ is the generator of a polynomially stable semigroup with parameter $\alpha >0$
	on $X$.
	If $B \in \mathcal{L}(U,X)$ satisfies
	$\ran(B) \subset D((-A)^{\beta})$ for some $\beta > \alpha$,
	then $\Sigma_{\mathrm{lin}}(A,B)$ is polynomially ISS with parameter $\alpha$
	 for $\mathcal{U}= L^{\infty}(\mathbb{R}_+,U)$.
\end{proposition}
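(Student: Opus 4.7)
The plan is to reduce the problem, via Lemma~\ref{lem:poly_ISS_admissible}, to showing that $B$ is infinite-time $L^{\infty}$-admissible for the polynomially stable semigroup $(T(t))_{t\geq 0}$ generated by $A$. Polynomial stability of the semigroup is already given by hypothesis, so once admissibility is established the conclusion follows immediately.

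To prove infinite-time $L^{\infty}$-admissibility, I would first observe that polynomial stability implies semi-uniform stability, so by Theorem~\ref{thm:equivalence_SUS} we have $i\mathbb{R} \subset \varrho(A)$; in particular $0 \in \varrho(A)$. Hence the fractional power $(-A)^{\beta}$ is well defined and closed. Since $\ran(B) \subset D((-A)^{\beta})$ and $B \in \mathcal{L}(U,X)$, the closed graph theorem yields $(-A)^{\beta} B \in \mathcal{L}(U,X)$. Next, by Proposition~\ref{prop:decay_rate}, the polynomial decay $\|T(t)A^{-1}\| = O(t^{-1/\alpha})$ transfers to the fractional power: there exists $M>0$ such that
\[
\|T(t)(-A)^{-\beta}\| \leq \frac{M}{(t+1)^{\beta/\alpha}} \qquad \forall t \geq 0,
\]
after absorbing the bounded behaviour on a compact initial interval into the constant.

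Combining these two facts, I would write $T(s)B = T(s)(-A)^{-\beta}(-A)^{\beta}B$ and estimate, for any $u \in L^{\infty}(\mathbb{R}_+,U)$ and $t\geq 0$,
\[
\left\| \int_0^t T(s) B u(s)\, \mathrm{d}s \right\|
\leq M\,\|(-A)^{\beta}B\|\, \|u\|_{\infty} \int_0^t \frac{\mathrm{d}s}{(s+1)^{\beta/\alpha}}.
\]
The hypothesis $\beta > \alpha$ gives $\beta/\alpha > 1$, so the integral on the right is bounded by a finite constant independent of $t$. This establishes infinite-time $L^{\infty}$-admissibility of $B$, and Lemma~\ref{lem:poly_ISS_admissible} then yields polynomial ISS with parameter $\alpha$.

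The only subtlety I foresee is the boundedness of $(-A)^{\beta}B$: one must be sure that $(-A)^{\beta}$ is well defined (requiring $0 \in \varrho(A)$ and sectoriality of $-A$, both supplied here by semi-uniform stability and uniform boundedness of the semigroup) and that its closedness plus the range condition combine with the closed graph theorem to give a bounded operator on all of $U$. Everything else is a direct Minkowski-type estimate with an integrable kernel.
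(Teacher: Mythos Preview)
Your proposal is correct and follows essentially the same approach as the paper: reduce via Lemma~\ref{lem:poly_ISS_admissible} to infinite-time $L^{\infty}$-admissibility, use closedness of $(-A)^{\beta}$ to get $(-A)^{\beta}B \in \mathcal{L}(U,X)$, invoke Proposition~\ref{prop:decay_rate} for the decay of $\|T(t)(-A)^{-\beta}\|$, and bound the convolution integral using $\beta/\alpha>1$. Your treatment is in fact slightly more explicit than the paper's about why $0\in\varrho(A)$ and why the closed graph theorem applies.
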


\begin{proof}
	Let $(T(t))_{t\geq 0}$ be the polynomially stable semigroup on $X$ 
	generated by $A$.
By Proposition~\ref{prop:decay_rate}, there exists $M>0$ such that 
\begin{equation*}
	\|T(t)(-A)^{-\beta}\| \leq \frac{M}{(t+1)^{\beta/\alpha}}\qquad 
	\forall t \geq 0.
\end{equation*}
	Since $(-A)^{\beta}$ is closed, we have that $(-A)^{\beta} B \in \mathcal{L}(U,X)$ by assumption.
	For all $u \in L^{\infty}(\mathbb{R}_+,U)$ and $t \geq 0$, 
	we obtain
	\begin{align*}
		\left\|
		\int^t_0 T(s) Bu(s) \mathrm{d}s
		\right\| 
		&=
		\left\|
		\int^t_0 T(s) (-A)^{-\beta} (-A)^{\beta}  Bu(s) \mathrm{d}s
		\right\| \\
		&\leq 
		\int^{t}_{0} \frac{M\|(-A)^{\beta}B\|}{(s+1)^{\beta/\alpha}} ~\!
		\|u\|_{\infty} \mathrm{d}s  \\
		&\leq 
		 \frac{\alpha M\|(-A)^{\beta}B\|  }{\beta-\alpha} \|u\|_{\infty}.
 	\end{align*}
	Hence $B$ is infinite-time $L^{\infty}$-admissible for $(T(t))_{t \geq 0}$.
	Thus, 
	$\Sigma_{\mathrm{lin}}(A,B)$ is 
	polynomially ISS by Lemma~\ref{lem:poly_ISS_admissible}.
	\qed
\end{proof}

From an argument similar to that in Example~18 of \cite{Paunonen2014OM},
we see that if $\beta < \alpha$, then the condition
$\ran(B) \subset D((-A)^{\beta})$
may not lead to UGS.
\begin{example}
	\label{ex:diagonal}
	{\em
		Let  $A$ be
		the generator of a polynomially
		stable semigroup $(T(t))_{t \geq 0}$  with parameter $\alpha >0$
		on a Banach space $X$.
		Set $U := X$ and $B := (-A)^{-\beta}$ with $0< \beta < \alpha$.
		Taking the input $u(t) := T(t)y_0$ with $y_0\in X$, we obtain
		\[
		\left\|
		\int^t_0 T(t-s)Bu(s) \mathrm{d}s
		\right\| = \|t T(t) (-A)^{-\beta} y_0\|
		\] 
		for all $t \geq 0$.
		If the linear system $\Sigma_{\mathrm{lin}}(A,B)$ is UGS
		for $\mathcal{U} = L^{\infty}(\mathbb{R}_+,U)$, then
		the uniform boundedness principle
		implies that 
		\begin{equation}
		\label{eq:input_term_ex}
		\sup_{t \geq 0} \|t T(t) (-A)^{-\beta}\| < \infty.
		\end{equation}
		However, one can easily find polynomially stable semigroups with
		parameter $\alpha$
		for which the condition \eqref{eq:input_term_ex} does not hold.
		Hence, the condition $\ran(B) \subset D((-A)^{\beta})$ with $\beta <\alpha$ 
		does not imply 
		UGS in general.
		The case $\alpha = \beta$ remains open except in the diagonalizable 
		case studied in the next subsection. 
	}
\end{example}

\subsection{Polynomial input-to-state stability for
	diagonalizable linear systems}
In this subsection, we consider linear systems with
diagonalizable generators and finite-rank input operators.
We aim to refine the condition on the range of the
input operator  obtained in Proposition~\ref{prop:polyiss}.
To this end, we first review the definition and basic properties 
of diagonalizable operators; 
see Section~2.6 of \cite{Tucsnak2009} for details.
\begin{definition}
	\label{def:diagonalizable}
	{\em 
		Let $X$ be a Hilbert space.
		The linear operator $A :D(A) \subset X \to X$ is {\em diagonalizable}
		if $\varrho(A) \not= \emptyset$ and there exists a 
		Riesz basis $(\varphi_n)_{n \in \mathbb{N}}$ in $X$
		consisting of eigenvectors of $A$.
	}
\end{definition}

Throughout this subsection, we place the following assumption.
\begin{assumption}
	\label{assump:one_d_case}
	{\em
	Let $X$ be a Hilbert space with inner product $\langle \cdot, 
	\cdot \rangle$. The 
	operator
	$A:D(A) \subset X \to X $ is diagonalizable, and
	$(\varphi_n)_{n \in \mathbb{N}}$ is a Riesz basis in $X$ consisting
	of eigenvectors of $A$.
	The biorthogonal sequence for $(\varphi_n)_{n \in \mathbb{N}}$
	and the eigenvalue corresponding to the eigenvector $\varphi_n$
	are given by 
	$(\psi_n)_{n \in \mathbb{N}}$
	and $\lambda_n$, respectively.
}
\end{assumption}

\begin{proposition}
	\label{prop:diagonalizable}
	Suppose that Assumption~\ref{assump:one_d_case} is satisfied. 
	Then the following statements hold:
	\begin{enumerate}
		\renewcommand{\labelenumi}{{\em \arabic{enumi}.}}
		\item The operator $A$ may be written as
		\[
		Ax = \sum_{n=1}^{\infty} \lambda_n \langle 
		x , \psi_n
		\rangle \varphi_n\qquad \forall x \in D(A)
		\]
		and
		\[
		D(A) =
		\left\{
		x \in X :
		\sum_{n=1}^{\infty} |\lambda_n|^2 ~\!|\langle 
		x , \psi_n
		\rangle |^2 < \infty
		\right\}.
		\]
		\item 
		The operator $A$ is the generator of a 
		$C_0$-semigroup $(T(t))_{t\geq 0}$ on $X$ if and only if
		\[
		\sup_{n \in \mathbb{N}} \re \lambda_n < \infty.
		\]
		In this case,
		the exponential growth bound of 
		$(T(t))_{t\geq 0}$ is given by $\sup_{n \in \mathbb{N}} \re \lambda_n$,
		and for all $x \in X$ and $t \geq 0$,
		\[
		T(t)x = \sum_{n=1}^{\infty} e^{t \lambda_n } \langle 
		x , \psi_n
		\rangle \varphi_n.
		\]
	\end{enumerate}
\end{proposition}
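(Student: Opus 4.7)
The plan is to exploit the Riesz basis expansion $x = \sum_{n\geq 1}\langle x,\psi_n\rangle \varphi_n$ together with the two-sided norm equivalence $c\sum_n|\langle x,\psi_n\rangle|^2 \leq \|x\|^2 \leq C\sum_n|\langle x,\psi_n\rangle|^2$ (for some $C\geq c>0$), and to use that $A$ is closed thanks to $\varrho(A)\neq\emptyset$.

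For Statement~1, I would first fix some $\mu\in\varrho(A)$ and use $R(\mu,A)\varphi_n=(\mu-\lambda_n)^{-1}\varphi_n$, together with linearity on partial sums and continuity of $R(\mu,A)$, to obtain the resolvent formula
\[
R(\mu,A)x=\sum_{n\geq 1}\frac{\langle x,\psi_n\rangle}{\mu-\lambda_n}\varphi_n,\qquad x\in X.
\]
Given $x\in D(A)$, writing $x=R(\mu,A)(\mu I-A)x$ and pairing both sides with $\psi_m$, biorthogonality then forces $\langle Ax,\psi_m\rangle=\lambda_m\langle x,\psi_m\rangle$, yielding the series representation $Ax=\sum_n\lambda_n\langle x,\psi_n\rangle\varphi_n$. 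The domain characterization then follows from the Riesz basis inequalities: if $x\in D(A)$ the coefficients $\lambda_n\langle x,\psi_n\rangle$ of $Ax$ are square summable, while if $\sum_n|\lambda_n|^2|\langle x,\psi_n\rangle|^2<\infty$, the partial sums $x_N:=\sum_{n=1}^N\langle x,\psi_n\rangle\varphi_n\in D(A)$ satisfy $x_N\to x$ and $(Ax_N)$ is Cauchy, so closedness of $A$ places $x$ in $D(A)$.

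For Statement~2, necessity is immediate since each $\lambda_n\in\sigma(A)$ forces $\sup_n\re\lambda_n\leq s(A)\leq\omega_0$ as soon as $A$ generates a semigroup with growth bound $\omega_0$. For sufficiency, let $\omega:=\sup_n\re\lambda_n<\infty$ and define
\[
S(t)x:=\sum_{n\geq 1}e^{t\lambda_n}\langle x,\psi_n\rangle\varphi_n,\qquad t\geq 0,\ x\in X.
\]
The bound $|e^{t\lambda_n}|\leq e^{t\omega}$ with the Riesz basis inequalities gives $\|S(t)\|\leq\sqrt{C/c}\,e^{t\omega}$; the identities $S(0)=I$ and $S(t+s)=S(t)S(s)$ follow by termwise manipulation and biorthogonality; strong continuity at $t=0$ is handled by dominated convergence on $\sum_n|e^{t\lambda_n}-1|^2|\langle x,\psi_n\rangle|^2$, whose terms vanish pointwise and are dominated by $(e^{\omega^+}+1)^2|\langle x,\psi_n\rangle|^2$ on $t\in[0,1]$, where $\omega^+:=\max(\omega,0)$. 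Letting $B$ denote the generator of $(S(t))_{t\geq 0}$, a parallel dominated-convergence argument applied to $\sum_n|(e^{t\lambda_n}-1)/t-\lambda_n|^2|\langle x,\psi_n\rangle|^2$, using the uniform bound $4|\lambda_n|^2 e^{2\omega^+}$ on $t\in(0,1]$ together with the square summability of $(\lambda_n\langle x,\psi_n\rangle)$ from Statement~1, shows that $D(A)\subset D(B)$ with $B|_{D(A)}=A$. Picking $\lambda>\omega$, the explicit resolvent formula of Statement~1 gives $\lambda\in\varrho(A)$, while $\lambda\in\varrho(B)$ follows from the growth-bound estimate; the identity $(\lambda I-A)R(\lambda,B)x=(\lambda I-B)R(\lambda,B)x=x$ on $X$ then forces $R(\lambda,A)=R(\lambda,B)$, so $D(A)=D(B)$ and $A=B$ generates $(S(t))_{t\geq 0}$. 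The growth-bound equality follows from $\|S(t)\|\leq\sqrt{C/c}\,e^{t\omega}$ combined with the spectral lower bound $\omega\leq\omega_0$.

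The main obstacle will be the upgrade $D(A)\subset D(B)\Rightarrow D(A)=D(B)$: the differentiation argument for $S(t)$ only produces the inclusion, and the equality requires the separate resolvent-comparison step above (equivalently, a core argument) to conclude that $A$ itself, and not merely a proper restriction of $B$, is the generator of $(S(t))_{t\geq 0}$.
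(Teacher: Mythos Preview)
The paper does not supply its own proof of this proposition: it is stated as a known background result, with the reader referred to Section~2.6 of \cite{Tucsnak2009} for details. Your argument is essentially the standard one found there, and is sound in outline.

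One small slip deserves correction. In the final identification step you write $(\lambda I-A)R(\lambda,B)x=(\lambda I-B)R(\lambda,B)x=x$, but this presupposes $R(\lambda,B)x\in D(A)$, which is precisely what you are trying to establish. The argument goes the other way: from $A\subset B$ you get $R(\lambda,A)x\in D(A)\subset D(B)$ and $(\lambda I-B)R(\lambda,A)x=(\lambda I-A)R(\lambda,A)x=x$, hence $R(\lambda,A)=R(\lambda,B)$ and therefore $D(A)=\ran R(\lambda,A)=\ran R(\lambda,B)=D(B)$. With this reversed, the proof is complete.
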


Suppose that 
the eigenvalues $(\lambda_n)_{n \in \mathbb{N}}$ of
a diagonalizable operator $A$ satisfy $\re \lambda_n \leq 0$ for all 
$n \in \mathbb{N}$. 
Then
$A$ generates a uniformly bounded semigroup.
Moreover, $-A$ is sectorial in the sense of
\cite[Chapter~2]{Haase2006}, and hence 
the fractional power $(-A)^{\alpha}$
is well defined for every $\alpha > 0$. 
The domain of the fractional power $(-A)^{\alpha}$
is given by
\[
D((-A)^{\alpha}) =
\left\{
x \in X: \sum_{n=1}^{\infty} |\lambda_n|^{2\alpha}~\! |\langle x,\psi_n \rangle |^2
< \infty
\right\}
\] 
for all $\alpha > 0$, where $(\psi_n)_{n \in \mathbb{N}}$ is as in 
Assumption~\ref{assump:one_d_case}.

A diagonalizable operator is similar to a normal operator. Hence,
by Proposition~\ref{prop:spectral_prop}, 
a diagonalizable operator with
eigenvalues $(\lambda_n)_{n \in \mathbb{N}}$ 
generates a polynomially stable semigroup with parameter 
$\alpha>0$
if and only if
$\re \lambda_n <0$ for all $n \in \mathbb{N}$ and
there exist $C,p>0$ such that 
\begin{equation}
\label{eq:eigenvalue_geometric_cond}
|\im \lambda_n| \geq \frac{C }{ |\re \lambda_n|^{1/\alpha}} \qquad 
\text{if $\re \lambda_n > -p$}.
\end{equation}

We obtain a refined sufficient condition for linear systems with diagonalizable generators
and finite-rank input operators to be polynomially ISS. 
\begin{theorem}
	\label{thm:diagonalizable_polyISS}
	Let Assumption~\ref{assump:one_d_case} be satisfied and let
	$U$ be a Banach space.
	Suppose that the diagonalizable operator 
	$A$ generates a polynomially stable semigroup with parameter 
	$\alpha>0$ on $X$.
	If $B \in \mathcal{L}(U,X)$ is a finite-rank operator
	and satisfies
	$
	\ran(B) \subset 
	D((-A)^{\alpha})
	$,
	then $\Sigma_{\mathrm{lin}}(A,B)$  is polynomially ISS
	with parameter $\alpha$ for $\mathcal{U}= L^{\infty}(\mathbb{R}_+,U)$.
\end{theorem}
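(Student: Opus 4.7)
The plan is to reduce polynomial ISS of $\Sigma_{\mathrm{lin}}(A,B)$ to infinite-time $L^{\infty}$-admissibility of $B$ via Lemma~\ref{lem:poly_ISS_admissible}. Since $B$ has finite rank, I would decompose $Bu = \sum_{k=1}^{N}\phi_k(u)\,b_k$ with $\phi_k \in U^*$ and $b_k \in \ran(B) \subset D((-A)^{\alpha})$, which reduces the admissibility estimate to the rank-one scalar-input case: for each $b \in D((-A)^{\alpha})$, one must show
\[
\sup_{t\geq 0,\;\|f\|_{\infty}\leq 1}\left\| \int_0^t T(s)b\,f(s)\,\mathrm{d}s \right\| < \infty.
\]

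Next, I would exploit the diagonalizable structure. Using the expansion $T(s)b = \sum_{n=1}^{\infty}\langle b,\psi_n\rangle e^{s\lambda_n}\varphi_n$ from Proposition~\ref{prop:diagonalizable} and testing the Bochner integral against the biorthogonal functionals $\langle\cdot,\psi_m\rangle$, I would obtain the representation
\[
\int_0^t T(s)b\,f(s)\,\mathrm{d}s = \sum_{n=1}^{\infty}\langle b,\psi_n\rangle I_n(t)\,\varphi_n, \qquad I_n(t) := \int_0^t e^{s\lambda_n}f(s)\,\mathrm{d}s.
\]
The Riesz basis property then yields a two-sided norm equivalence
\[
\left\| \int_0^t T(s)b\,f(s)\,\mathrm{d}s \right\|^2 \asymp \sum_{n=1}^{\infty}|\langle b,\psi_n\rangle|^2\,|I_n(t)|^2,
\]
and for $\|f\|_{\infty}\leq 1$ the elementary estimate $|I_n(t)| \leq |\re\lambda_n|^{-1}$ holds uniformly in $t$, since $\re\lambda_n<0$ for all $n$ by Theorem~\ref{thm:equivalence_SUS}.

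The core step is to split the series using the spectral characterization of polynomial stability for diagonalizable operators (Proposition~\ref{prop:spectral_prop}): there exist constants $C,p>0$ such that $|\im\lambda_n| \geq C\,|\re\lambda_n|^{-1/\alpha}$ whenever $\re\lambda_n > -p$. Indices with $\re\lambda_n \leq -p$ contribute at most $p^{-2}\sum_n|\langle b,\psi_n\rangle|^2$, which is finite since $b \in X$. For indices with $\re\lambda_n > -p$, the spectral gap rearranges to $|\re\lambda_n|^{-2} \leq C^{-2\alpha}|\im\lambda_n|^{2\alpha} \leq C^{-2\alpha}|\lambda_n|^{2\alpha}$, so the remaining contribution is bounded by a constant times $\sum_n |\lambda_n|^{2\alpha}|\langle b,\psi_n\rangle|^2$; this sum is finite precisely because $b \in D((-A)^{\alpha})$.

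The main obstacle I anticipate is making the $\ell^2$-type representation of the Bochner integral fully rigorous, in particular justifying the interchange of summation and integration. This should be handled by applying each continuous functional $\langle\cdot,\psi_m\rangle$ to the Bochner integral before summing, which reduces the issue to the scalar identity $\langle\int_0^t T(s)b f(s)\,\mathrm{d}s,\psi_m\rangle = \langle b,\psi_m\rangle I_m(t)$, after which the Riesz basis norm equivalence completes the estimate. Everything else is the routine spectral-geometric computation sketched above.
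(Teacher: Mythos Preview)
Your proposal is correct and follows essentially the same route as the paper's proof: reduce to infinite-time $L^{\infty}$-admissibility via Lemma~\ref{lem:poly_ISS_admissible}, pass to coordinates via the Riesz basis, bound each coefficient by $|\re\lambda_n|^{-1}$, and then control $\sum_n|\langle b,\psi_n\rangle|^2/|\re\lambda_n|^2$ by splitting along $\re\lambda_n\gtrless -p$ using the spectral condition~\eqref{eq:eigenvalue_geometric_cond} together with $b\in D((-A)^\alpha)$. The only cosmetic difference is that the paper expands $Bv$ in an orthonormal basis of $\ran(B)$ and keeps the full input $u$ throughout, whereas you first reduce to the rank-one scalar-input case via $Bu=\sum_k\phi_k(u)b_k$; both reductions lead to the identical $\ell^2$ estimate, and your anticipated obstacle (interchanging $\sum$ and $\int$) is resolved exactly as you suggest, by pairing with each $\psi_m$ first---which is precisely what the paper does.
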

\begin{proof}
	By Lemma~\ref{lem:poly_ISS_admissible}, it suffices to show that 
	$B$ is infinite-time $L^{\infty}$-admissible for $(T(t))_{t\geq 0}$
	in the case $B \not=0$.
	
	By a property of a Riesz basis (see, e.g., 
	Proposition~2.5.2 of \cite{Tucsnak2009}), there exists a constant $M_1>0$
	such that
	\begin{equation}
	\label{eq:int_conv_bound}
	\left\|
	\int^t_0 
	T(s) B u(s)\mathrm{d}s 
	\right\|^2 \leq 
	M_1 \sum_{n=1}^{\infty}
	\left|
	\left\langle
	\int^t_0 T(s) Bu(s)\mathrm{d}s, \psi_n
	\right\rangle 
	\right|^2
	\end{equation}
	for all $u \in L^{\infty}(\mathbb{R}_+,U)$ and  $t\geq 0$.
	Since $B$ is a finite-rank operator,
	there is an orthonormal basis $(\xi_k)_{k=1}^m$ of
	the finite-dimensional space $\ran (B)$, where 
	$m\in\mathbb{N}$ is the dimension of $\ran (B)$. Then we obtain
	\[
	Bv = \sum_{k=1}^m \langle Bv, \xi_k \rangle \xi_k\qquad \forall v \in U.
	\]
	Since 
	\[
	\langle  T(s) \xi_k, \psi_n \rangle =
	\langle  \xi_k, T(s)^*\psi_n \rangle =
	e^{s \lambda_n} \langle  \xi_k, \psi_n \rangle
	\]
	for all $s \geq 0$, 
	it follows that
	\begin{align*}
		\left\langle
		\int^t_0 T(s) Bu(s)\mathrm{d}s, \psi_n
		\right\rangle  &=
		\left\langle
		\int^t_0  \sum_{k=1}^m \langle Bu(s), \xi_k \rangle T(s) \xi_k \mathrm{d}s, \psi_n
		\right\rangle \\
		&=
		\sum_{k=1}^m 
		\int^t_0  e^{s\lambda_n} 
		\langle Bu(s), \xi_k \rangle
		\mathrm{d}s ~\! \langle  \xi_k , \psi_n \rangle
	\end{align*}
	for all $u \in L^{\infty}(\mathbb{R}_+,U)$ and $t\geq 0$.
	Therefore,
	\begin{align*}
		\left|
		\left\langle
		\int^t_0 T(s) Bu(s)\mathrm{d}s, \psi_n
		\right\rangle 
		\right| &\leq\sum_{k=1}^m
		\int^t_0  e^{ s \re \lambda_n} 
		|\langle Bu(s), \xi_k \rangle  |\mathrm{d}s ~\! |\langle \xi_k, \psi_n \rangle|  \\
		&\leq \frac{\|B\| ~\! \|u\|_{\infty}}{|\re  \lambda_n |}
		\sum_{k=1}^m |\langle \xi_k, \psi_n \rangle|.
	\end{align*}

	Combining 
	$\xi_k \in  D((-A)^{\alpha}) $ with
	the geometric condition \eqref{eq:eigenvalue_geometric_cond} on 
	$(\lambda_n)_{n \in \mathbb{N}}$, we obtain
	\[
	\sum_{n=1}^{\infty} 
	\frac{\left|
		\langle \xi_k,\psi_n \rangle 
		\right|^2}{|\re \lambda_n|^2 } 
	\leq
	\frac{1}{p^2}
	\sum_{n=1}^{\infty} 
	\left|
	\langle \xi_k,\psi_n \rangle 
	\right|^2
	+
	\frac{1}{C^{2\alpha}}
	\sum_{n=1}^{\infty} 
	|\lambda_n|^{2\alpha}~\! |
	\langle \xi_k,\psi_n \rangle 
	|^2 =: c_k < \infty
	\]
	for every $k=1,\dots,m$. Therefore,
	\begin{align}
		\sum_{n=1}^{\infty}
		\left|
		\left\langle
		\int^t_0 T(s) Bu(s)\mathrm{d}s, \psi_n
		\right\rangle 
		\right|^2 &\leq
		m(\|B\| ~\!\|u\|_{\infty})^2
		\sum_{k=1}^m  \sum_{n=1}^{\infty}
		\frac{|\langle \xi_k, \psi_n \rangle|^2 }{|\re \lambda_n|^2} \notag \\
		&\leq m(\|B\| ~\!\|u\|_{\infty})^2 \sum_{k=1}^{m} c_k
		\label{eq:int_conv_sum}
	\end{align}
	for all $u \in L^{\infty}(\mathbb{R}_+,U)$ and $t\geq 0$.
	From the estimates \eqref{eq:int_conv_bound} and \eqref{eq:int_conv_sum},
	we obtain
	\begin{align*}
		\left\|
		\int^t_0 T(s) Bu(s) \mathrm{d}s
		\right\| \leq 
		\left(\|B\| ~\! \sqrt{mM_1\sum_{k=1}^{m} c_k } \right)\|u\|_{\infty}
	\end{align*}
	for all $u \in L^{\infty}(\mathbb{R}_+,U)$ and $t\geq 0$.
	Thus,
	$B$ is infinite-time $L^{\infty}$-admissible for $(T(t))_{t\geq 0}$.
	\qed
\end{proof}

We apply Theorem~\ref{thm:diagonalizable_polyISS} to
an Euler-Bernoulli beam with weak damping.
\begin{example}
	{\em 
		Consider
		a simply supported Euler-Bernoulli beam with weak damping,
		which is described by the following partial
		differential equation on $(0,1)$:
		\begin{align}
		\label{eq:EB_eq}
		\begin{cases}
			\dfrac{\partial^2 z}{\partial t^2} (\zeta,t) +
			\dfrac{\partial^4 z}{\partial \zeta^4} (\zeta,t) + 
			h(\zeta) \displaystyle
			\int^1_0 h(r) \dfrac{\partial z}{\partial t} (r,t) \mathrm{d}r +b(\zeta)u(t)= 0,\\
\hspace{190pt} 0<\zeta <1,~t\geq 0\vspace{5pt}\\ 
			z(0,t) = 0 = z(1,t),\quad \dfrac{\partial^2 z}{\partial \zeta^2} (0,t)
			=
			0
			= \dfrac{\partial^2 z}{\partial \zeta^2} (1,t),\quad t \geq 0 \vspace{5pt}\\
		z(\zeta,0) = z_0(\zeta),\quad 
			\dfrac{\partial z}{\partial t} (\zeta,0) = z_1(\zeta), \quad 0<\zeta <1,
			\end{cases}
		\end{align}
		where $b \in L^2(0,1)$ is a ``shaping function'' for the external input $u$ and 
		$h$ is the damping coefficient. Here we set
		$h(\zeta) := 1-\zeta$ for $\zeta \in (0,1)$.
		
		It is well known that the partial differential equation \eqref{eq:EB_eq}
		can be written as a first-order linear system in the following way;
		see, e.g., Exercise~3.18 of \cite{Curtain2020}.
		Define $X_0 := L^2(0,1)$ and 
		\[
		A_0 f := \frac{\mathrm{d}^4 f}{\mathrm{d}\zeta^4}
		\]
		with domain
		\[
		D(A_0) := \left\{
		f \in W^{4,2}(0,1):f(0) = 0=f(1) \text{~and~}  \frac{\mathrm{d}^2 f}{\mathrm{d}\zeta^2}(0) = 0 = \frac{\mathrm{d}^2 f}{\mathrm{d}\zeta^2}(1)
		\right\}.
		\]
		The operator $A_0$ has a positive self-adjoint square root $A_0^{1/2}=
		-\frac{\mathrm{d}^2}{\mathrm{d}\zeta^2}$
		with domain
		\[
		D\big(A_0^{1/2}\big) = \{
		f \in W^{2,2}(0,1):f(0) = 0=f(1)
		\}.
		\]
		The space $X := D(A_0^{1/2}) \times L^2(0,1)$ equipped with an
		inner product
		\[
		\left\langle
		\begin{bmatrix}
		x_1 \\ x_2
		\end{bmatrix},
		\begin{bmatrix}
		y_1 \\ y_2
		\end{bmatrix}
		\right\rangle 
		:= 
		\big\langle
		A_0^{1/2}x_1,A_0^{1/2}y_1
		\big\rangle_{L^2} + 
		\langle
		x_2,y_2
		\rangle_{L^2}
		\]
		is a Hilbert space.
		Define the operators $A_1:D(A_1) \subset X \to X$ and $B,H \in \mathcal{L}(\mathbb{C},X)$ by
		\[
		A_1 := 
		\begin{bmatrix}
		0 & I \\-A_0 &0
		\end{bmatrix}
		\]
		with domain $D(A_1) = D(A_0) \times D(A_0^{1/2})$ and
		\[
		Bv := 
		\begin{bmatrix} 
		0 \\ -bv
		\end{bmatrix},\quad
		Hv := 
		\begin{bmatrix} 
		0 \\ hv
		\end{bmatrix},
		\quad v \in \mathbb{C}.
		\]
		For
		\[
		x := 
		\begin{bmatrix}
		z \vspace{3pt}\\ \dfrac{\partial z}{\partial t}\vspace{4pt}
		\end{bmatrix},\quad
		x_0 :=
		\begin{bmatrix}
		z_0 \\ z_1
		\end{bmatrix},
		\]
		the partial differential equation \eqref{eq:EB_eq} can be written as
		\[
		\dot x(t) = (A_1-HH^*)x(t) + Bu(t),\quad t \geq 0;\qquad x(0) = x_0.
		\]
		
		The operator $A_1$ is diagonalizable with simple
		eigenvalues
		\[
		\lambda_n := i n^2 \pi^2,\quad \lambda_{-n} := -in^2\pi^2,\quad n\in\mathbb{N}.
		\]
		Since $A_1$ has compact resolvents
		by Lemma~3.2.12 of \cite{Curtain2020}, 
		it follows from
		Theorem~1 of \cite{Xu1996} that 
		$A := A_1-HH^*$ is also diagonalizable.
		Moreover,
		$A$ generates a polynomially stable semigroup
		with parameter $\alpha = 1$ by Corollary~6.6 of \cite{Chill2019}.
		Thus, Theorem~\ref{thm:diagonalizable_polyISS} shows that
		$\Sigma_{\text{lin}}(A, B)$ is polynomially ISS with parameter $\alpha = 1$
		if $\ran (B) \subset D(A) = D(A_1)$, i.e.,
		 $b \in D(A_0^{1/2})$.
	}
\end{example}

\subsection{Case where  eigenvalues near the imaginary axis have
uniformly separated imaginary parts}
We investigate 
how sharp the condition $\ran(B) \subset D((-A)^{\alpha})$ is.  
To this end, we employ the relation between Laplace-Carleson embeddings
and
infinite-time $L^{\infty}$-admissibility established in \cite{Jacob2021_arXiv}.

Let $A:D(A) \subset X \to X$ 
be diagonalizable  and generate a
strongly stable
semigroup $(T(t))_{t\geq 0}$ on $X$. 
Let
$B \in \mathcal{L}(\mathbb{C},X)$ be
represented as $Bv = bv$ for some $b \in X$ and all $v \in \mathbb{C}$.
Define the Borel measure $\nu$ on 
the open right half-plane $\{
\lambda \in \mathbb{C}: \re \lambda >0
\}$ by
\begin{align*}
\nu &:= \sum_{n \in \mathbb{N}} |\langle b,\psi_n\rangle|^2 \delta_{-\lambda_n},
\end{align*}
where 
$(\lambda_n)_{n \in \mathbb{N}}$ and 
	$(\psi_n)_{n \in \mathbb{N}}$ are as in Assumption~\ref{assump:one_d_case}
	and
$\delta_{-\lambda_n}$ is the Dirac measure at the point $-\lambda_n$
for $n \in \mathbb{N}$.
Define the Carleson square $Q_I$ 
associated to an interval $I \subset i \mathbb{R}$
and the dyadic stripe $S_k$ for $k \in \mathbb{Z}$  by
\begin{align}
Q_I &:= \{
\lambda \in \mathbb{C} : i \im \lambda \in I,~0 < \re \lambda < |I|
\}  \\
S_k &:= \{
\lambda \in \mathbb{C}  : 2^k \leq \re \lambda  < 2^{k+1}
\}.\label{eq:mu_QS_def}
\end{align}
Then Theorem~2.5 of \cite{Jacob2021_arXiv} shows that $B$ is 
infinite-time $L^{\infty}$-admissible for  $(T(t))_{t\geq 0}$ 
if and only if
\begin{equation}
\label{eq:Carleson}
\sum_{k \in \mathbb{Z}} \sup_{\substack{I\subset i \mathbb{R} \\ 
		\text{$I$ interval}} }
\frac{\nu(Q_I \cap S_k)}{|I|^2} < \infty.
\end{equation}

Using this equivalence of admissibility, we obtain
a necessary and sufficient condition for polynomial ISS.
We
write
\begin{equation}
\label{eq:Phi_k_def}
\Phi_k := 
\begin{cases}\displaystyle
\sup_{-\lambda_n \in S_k} 
\frac{|\langle b,\psi_n\rangle|^2}{|\re \lambda_n|^2} & \text{if $\{-\lambda_n:n \in \mathbb{N}\}
	\cap S_k \not= \emptyset$} \vspace{3pt}\\
0 & \text{if $\{-\lambda_n:n \in \mathbb{N}\}
	\cap S_k = \emptyset$}
\end{cases}
\end{equation}
for $k \in \mathbb{Z}$.
\begin{theorem}
	Let Assumption~\ref{assump:one_d_case} hold and let $B \in \mathcal{L}(\mathbb{C},X)$
	be represented as $Bv = bv$ for some $b \in X$ and all $v \in \mathbb{C}$.
	Assume
	that there exists $p >0$ such that 
	\begin{equation}
	\label{eq:delta_12}
	\inf\{|\im \lambda_n - \im \lambda_{m}| :
	n,m \in \mathbb{N},~n\not=m,~|\re \lambda_n|,|\re\lambda_m| < 
p
	 \} > 0.
	\end{equation}
	Then $\Sigma_{\mathrm{lin}}(A,B)$ is polynomially ISS with parameter $\alpha>0$ for $\mathcal{U} = L^{\infty}(\mathbb{R}_+,U)$
	if and only if
	 the diagonalizable operator 
	$A$ generates a polynomially stable semigroup with parameter 
	$\alpha$ and 
		\begin{equation}
		\label{eq:Carleson2}
		\sum_{k \in \mathbb{Z}} \Phi_k < \infty,
		\end{equation}
		where 
		$\Phi_k$ is defined as in \eqref{eq:Phi_k_def} for $k \in \mathbb{Z}$.
\end{theorem}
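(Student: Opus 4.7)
The plan is to chain together two known reductions and then verify the remaining Carleson-type comparison by a counting argument. First I would invoke Lemma~\ref{lem:poly_ISS_admissible} to reduce polynomial ISS with parameter $\alpha$ to the conjunction of polynomial stability of $(T(t))_{t\geq 0}$ with parameter $\alpha$ and infinite-time $L^{\infty}$-admissibility of $B$. Since polynomial stability entails strong stability of the semigroup, I could then apply Theorem~2.5 of \cite{Jacob2021_arXiv} to rewrite admissibility of $B$ as the Laplace--Carleson condition \eqref{eq:Carleson}. The remaining task is to prove that, under \eqref{eq:delta_12} together with polynomial stability of $(T(t))_{t\geq 0}$, the conditions \eqref{eq:Carleson} and \eqref{eq:Carleson2} are equivalent.

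For the direction $\eqref{eq:Carleson}\Rightarrow\eqref{eq:Carleson2}$, I would show the pointwise lower bound $\sup_I \nu(Q_I\cap S_k)/|I|^2 \geq \Phi_k$ for every $k\in\mathbb{Z}$. Given $-\lambda_{n^*}\in S_k$, I would choose an interval $I\subset i\mathbb{R}$ centered at $-i\im\lambda_{n^*}$ with $|I|$ slightly larger than $|\re\lambda_{n^*}|$, so that $-\lambda_{n^*}\in Q_I\cap S_k$ and $\nu(Q_I\cap S_k)/|I|^2\to |\langle b,\psi_{n^*}\rangle|^2/|\re\lambda_{n^*}|^2$ in the limit. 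Taking the supremum over $n^*$ yields the claimed bound, and summing over $k$ gives $\sum_k\Phi_k\leq \sum_k \sup_I\nu(Q_I\cap S_k)/|I|^2<\infty$.

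For the converse, I would split the sum over $k$ at some $k_0$ with $2^{k_0+1}\leq p$. For $k\leq k_0$, every $-\lambda_n\in S_k$ satisfies $|\re\lambda_n|<p$, so \eqref{eq:delta_12} provides a uniform separation $d>0$ of the imaginary parts of such eigenvalues; any interval $I$ with $|I|\geq 2^k$ then contains at most $\lceil |I|/d\rceil+1$ such imaginary parts, and combined with the pointwise bound $|\langle b,\psi_n\rangle|^2\leq |\re\lambda_n|^2\Phi_k\leq 4^{k+1}\Phi_k$ this should give $\sup_I\nu(Q_I\cap S_k)/|I|^2\leq C_1\Phi_k$ for a constant $C_1$ depending only on $p$ and $d$. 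For $k>k_0$ the separation is no longer available, but the crude estimate $\nu(Q_I\cap S_k)\leq \nu(S_k)$ together with $|I|\geq 2^k$ yields $\sup_I \nu/|I|^2\leq \nu(S_k)/4^k$; because $(\varphi_n)_{n\in\mathbb{N}}$ is a Riesz basis, $\sum_n|\langle b,\psi_n\rangle|^2\leq C\|b\|^2<\infty$, so the tail $\sum_{k>k_0}\nu(S_k)/4^k$ is finite automatically, independently of $(\Phi_k)_{k\in\mathbb{Z}}$.

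The main obstacle will be the counting step in the small-$k$ regime: converting \eqref{eq:delta_12} into the uniform-in-$k$ inequality $\sup_I\nu(Q_I\cap S_k)/|I|^2\leq C_1\Phi_k$ with a constant $C_1$ that does not depend on $k$, and then combining this with the automatic large-$k$ tail estimate to produce a single summable bound. Once that uniform estimate is established, $\sum_k\Phi_k<\infty$ implies $\sum_k\sup_I\nu(Q_I\cap S_k)/|I|^2<\infty$, which together with the lower bound from the second paragraph closes the equivalence between \eqref{eq:Carleson} and \eqref{eq:Carleson2} and completes the proof.
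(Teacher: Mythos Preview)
Your proposal is correct and follows the same overall strategy as the paper: reduce to infinite-time $L^\infty$-admissibility via Lemma~\ref{lem:poly_ISS_admissible}, invoke the Laplace--Carleson characterization \eqref{eq:Carleson} from \cite{Jacob2021_arXiv}, and then show \eqref{eq:Carleson} and \eqref{eq:Carleson2} are equivalent; your argument for $\eqref{eq:Carleson}\Rightarrow\eqref{eq:Carleson2}$ is essentially identical to the paper's.

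The only genuine difference lies in the converse direction $\eqref{eq:Carleson2}\Rightarrow\eqref{eq:Carleson}$. You split first on $k$ (small $k\leq k_0$ versus large $k>k_0$) and, for small $k$, run a counting argument to obtain the uniform estimate $\sup_I\nu(Q_I\cap S_k)/|I|^2\leq C_1\Phi_k$ with $C_1$ depending only on $p$ and $d$; this works because $2^k\leq p/2$ controls the factor $4^{k+1}/|I|$ arising from the count $\lceil |I|/d\rceil+1$. The paper instead splits first on $|I|$ versus $d$: when $|I|\geq d$ one uses the crude bound $\nu(S_k)/d^2$ for \emph{all} $k$, and when $|I|<d$ and $2^{k+1}<p$ one observes that $Q_I\cap S_k$ contains \emph{at most one} eigenvalue, so the ratio is immediately $\leq\Phi_k$ without any counting. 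Both routes rely on $\sum_k\nu(S_k)=\sum_n|\langle b,\psi_n\rangle|^2<\infty$ to absorb the contribution not controlled by $\Phi_k$. The paper's decomposition is a bit shorter because the single-eigenvalue observation replaces your counting step, while your version has the minor advantage of producing a pointwise bound $\sup_I\leq C_1\Phi_k$ on the small-$k$ range rather than the three-way maximum the paper uses.
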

\begin{proof}
	We first note that a polynomially stable semigroup is
	strongly stable.
	By Lemma~\ref{lem:poly_ISS_admissible},
	it suffices to show that the conditions \eqref{eq:Carleson} and
	\eqref{eq:Carleson2} are equivalent under the assumption \eqref{eq:delta_12}.

	Let $p>0$ satisfy \eqref{eq:delta_12}. There exists 
	$d>0$ such that $|\im \lambda_n - \im \lambda_{m}| \geq d$
	for all $n,m \in \mathbb{N}$ satisfying 
	$n\not=m$ and $|\re \lambda_n|, |\re\lambda_m| < p$.
	If an interval $I \subset i \mathbb{R}$ satisfies 
	$|I| \geq  d$,
	then for all $k \in \mathbb{Z}$,
	\[
	\frac{\nu(Q_I \cap S_k)}{|I|^2} \leq \frac{\nu(Q_I \cap S_k)}{d^2} \leq 
	\frac{\nu(S_k)}{d^2}.
	\]
	
	Suppose next that an interval $I \subset i \mathbb{R}$ satisfies 
	$|I| <  d$.
	If $k \in \mathbb{Z}$ satisfies
	$p \leq 2^{k+1}$,
	then $\nu(Q_I \cap S_k) >0$ implies $|I| \geq p/2$, and
	therefore
	\[
	\frac{\nu(Q_I \cap S_k)}{|I|^2} \leq \frac{4 \nu(Q_I \cap S_k)}{p^2} \leq 
	\frac{4 \nu(S_k)}{p^2}.
	\]
	Let  $k \in \mathbb{Z}$ satisfy 
	$p> 2^{k+1}$. 
	For $-\lambda_n,-\lambda_m \in S_k$ with $n\not=m$, we obtain
	$|\im \lambda_n - \im \lambda_{m}| \geq d$ by assumption.
	Recalling that the interval $I$ is chosen so that
	$|I| < d$, we have that $Q_I \cap S_k$ contains
	at most one element of $(-\lambda_n)_{n \in \mathbb{N}}$. Since
	$\nu(Q_I \cap S_k) = |\langle b,\psi_n\rangle|^2$ for some $n \in \mathbb{N}$ with
	$-\lambda_n\in S_k$  or $\nu(Q_I \cap S_k) = 0$,
	it follows that
	\[
	\frac{\nu(Q_I \cap S_k)}{|I|^2} \leq 
	\Phi_k.
	\]

	We have shown that for every interval $I \subset i \mathbb{R}$
	and $k \in \mathbb{Z}$,
	\[
	\frac{\nu(Q_I \cap S_k)}{|I|^2} \leq 
	\max\left\{
	\frac{\nu(S_k)}{d^2},~
	\frac{4 \nu(S_k)}{p^2},~
	\Phi_k
	\right\}.
	\]
	Hence
	\begin{align*}
		\sum_{k \in \mathbb{Z}} \sup_{\substack{I\subset i \mathbb{R} \\ 
				\text{$I$ interval}} }
		\frac{\nu(Q_I \cap S_k)}{|I|^2} &\leq 
		\left(
		\frac{1}{d^2} + \frac{4}{p^2}
		\right)
		\sum_{k \in \mathbb{Z}} \nu(S_k) + 
		\sum_{k \in \mathbb{Z}} \Phi_k\\
		&=
		\left(
		\frac{1}{d^2} + \frac{4 }{p^2}
		\right)
		\sum_{n \in \mathbb{N}} |\langle b,\psi_n\rangle|^2 + 
		\sum_{k \in \mathbb{Z}} \Phi_k .
	\end{align*}
	Since $b \in X$, it follows that $\sum_{n \in \mathbb{N}} |\langle b,\psi_n\rangle|^2 < \infty$.
	Therefore, 
	\eqref{eq:Carleson2} implies 
	\eqref{eq:Carleson}.
	
	Conversely, 
	for all $k \in \mathbb{Z}$,
	if $-\lambda_n \in S_k$, then
	\[
	\frac{|\langle b,\psi_n\rangle|^2}{|\re \lambda_n|^2} \leq \sup_{\substack{I\subset i \mathbb{R} \\ 
			\text{$I$ interval}} }
	\frac{\nu(Q_I \cap S_k)}{|I|^2},
	\]
	and hence
	\[
	\Phi_k\leq \sup_{\substack{I\subset i \mathbb{R} \\ 
			\text{$I$ interval}} }
	\frac{\nu(Q_I \cap S_k)}{|I|^2}.
	\]
	This yields
	\[
	\sum_{k \in \mathbb{Z}}
	\Phi_k \leq 
	\sum_{k \in \mathbb{Z}}\sup_{\substack{I\subset i \mathbb{R} \\ 
			\text{$I$ interval}} }
	\frac{\nu(Q_I \cap S_k)}{|I|^2}.
	\]
	Thus, \eqref{eq:Carleson} implies 
	\eqref{eq:Carleson2}.\qed
\end{proof}

For $k \in \mathbb{Z}$,
define 
\begin{equation*}
{\widetilde \Phi}_k := 
\begin{cases}\displaystyle
\sup_{-\lambda_n \in S_k} 
|\lambda_n|^{2\alpha} ~\! |\langle b,\psi_n\rangle|^2  & \text{if $\{-\lambda_n:n \in \mathbb{N}\}
	\cap S_k \not= \emptyset$} \vspace{3pt}\\
0 & \text{if $\{-\lambda_n:n \in \mathbb{N}\}
	\cap S_k = \emptyset$}.
\end{cases}
\end{equation*}
A routine calculation shows that 
if $\lim_{n \to \infty}\re \lambda_n = 0$ and if there exists $C>0$ such that 
\[
|\im \lambda_n| - \frac{C}{|\re \lambda_n|^{1/\alpha}} \to 0\qquad \text{as $n \to \infty$},
\]
then the condition  \eqref{eq:Carleson2} is equivalent to
\[
\sum_{k \in \mathbb{Z}} {\widetilde \Phi}_k < \infty.
\]
From this, we observe that 
the condition \eqref{eq:Carleson2} is milder than $b \subset D((-A)^{\alpha})$.
However, 
the following example shows that
if the assumption \eqref{eq:delta_12} is not satisfied,
then $b \in D((-A)^{\alpha})$ may be necessary and sufficient for 
infinite-time $L^{\infty}$-admissibility. 
\begin{example}
	{\em
	Consider a diagonalizable operator $A$ whose
	eigenvalues $(\lambda_n)_{n \in \mathbb{N}}$  are given by
	\[
	\lambda_n := -\frac{1}{2^k} - i2^k, \quad 
	2^k \leq n \leq 2^{k+1} - 1,~ k \in \mathbb{N}_0.
	\]
	Since $(\lambda_n)_{n \in \mathbb{N}}$ satisfies 
	the geometric condition
	\eqref{eq:eigenvalue_geometric_cond} with $\alpha = 1$, 
	it follows that $A$ generates a polynomially stable semigroups
	with parameter $\alpha = 1$.
	For all $k \in \mathbb{N}_0$, taking intervals 
	$I \subset i \mathbb{R}$
	with center $i2^k$, we obtain
	\begin{align*}
		\sup_{\substack{I\subset i \mathbb{R} \\ 
				\text{$I$ interval}} }
		\frac{\nu(Q_I \cap S_{-k})}{|I|^2} &=
		\frac{\sum_{n=2^k}^{2^{k+1}-1} |\langle b,\psi_n\rangle|^2}{1/2^{2k}} \\
		&\geq 
		\frac{1}{2} \sum_{n=2^k}^{2^{k+1}-1} | \lambda_n|^{2} ~\! |\langle b,\psi_n\rangle|^2.
	\end{align*}
	This yields
	\begin{align*}
	\sum_{n=1}^{\infty} | \lambda_n|^{2}~\!  |\langle b,\psi_n\rangle|^2 &= 
	\sum_{k \in \mathbb{N}_0}\sum_{n=2^k}^{2^{k+1}-1} | \lambda_n|^{2}~\!  |\langle b,\psi_n\rangle|^2\\
	&\leq 2 \sum_{k \in \mathbb{Z}}\sup_{\substack{I\subset i \mathbb{R} \\ 
			\text{$I$ interval}} }
	\frac{\nu(Q_I \cap S_k)}{|I|^2}.
	\end{align*}
	Thus, infinite-time $L^{\infty}$-admissibility 
	implies $b \in D(A)$.
}
\end{example}

\section{Polynomial integral input-to-state stability of bilinear systems}
\label{sec:PolyiISS}
In the previous section, we saw that  polynomial ISS
is restrictive even for linear systems with bounded input operators.
This is because infinite-time $L^{\infty}$-admissibility cannot be achieved
for all bounded input operators due to the weak asymptotic 
property of polynomially stable
semigroups.
This motivates us to study a semi-uniform version of 
integral input-to-state stability, which provides norm estimates of
trajectories with respect to a kind of energy fed into systems.

We recall a stability notion for systems without inputs; see
\cite[Definition~5]{Mironchenko2018}.
\begin{definition}
	{\em
		The semi-linear system $\Sigma(A,F)$ is called 
		{\em uniformly globally stable at zero} if
			the following two conditions hold:
		\begin{enumerate}
			\item
			$\Sigma(A,F)$ is forward complete.
			\item 
			There exists $\gamma \in \mathcal{K}_{\infty}$
			such that 
			\[
			\|\phi(t,x_0,0)\| \leq \gamma(\|x_0\|)
			\]
			 for all $x_0 \in X$ and $t \geq 0$.
		\end{enumerate}
}
\end{definition}	
	
We define the concept of semi-uniform integral input-to-state
stability.
\begin{definition}
	{\em
	The semi-linear system $\Sigma(A,F)$ is called 
	{\em semi-uniformly integral 
		input-to-state stable (semi-uniformly iISS)} if 
	the following two conditions hold:
	\begin{enumerate}
		\item
		$\Sigma(A,F)$ is uniformly globally stable at zero.
		\item
		There exist $\kappa \in \mathcal{KL}$, $\theta \in \mathcal{K}_{\infty}$, and
		$\mu \in \mathcal{K}$ such that
		\begin{equation}
		\label{eq:semi_iISS}
		\|\phi(t,x_0,u)\| \leq 
		\kappa (\|x_0\|_A, t) + \theta \left(
		\int^t_0 \mu (\|u(s)\|_U)\mathrm{d}s
		\right)
		\end{equation}
		for all $x_0 \in D(A)$, $u \in \mathcal{U}$, and $t \geq 0$.
	\end{enumerate}
	In particular, if there exists $\alpha >0$ such that for
	all $r > 0$,
	$\kappa(r,t) = O(t^{-1/\alpha})$ as $t \to \infty$,  then $\Sigma(A,F)$ is called 
	{\em polynomially integral input-to-state stable (polynomially iISS) 
		with parameter $\alpha >0$}.
	}
\end{definition}

Note that the integral $\int^t_0 \mu (\|u(s)\|_U)\mathrm{d}s$ in the right-hand side of
the inequality
\eqref{eq:semi_iISS} may be infinite. In that case, the inequality 
\eqref{eq:semi_iISS}  trivially holds.

For every generator $A$ of 
a semi-uniformly stable semigroup and every
bounded input operator $B$,
the linear system
$\Sigma_{\mathrm{lin}}(A,B)$ is semi-uniform iISS.
Moreover, if the linear system
$\Sigma_{\mathrm{lin}}(A,B)$ is semi-uniform iISS, then
$\Sigma_{\mathrm{lin}}(A,B)$ is strong iISS in the sense of
Definition~4 in \cite{Nabiullin2018}.
This can be seen by 
using the equality $\phi(t,x_0,u) = \phi(t,x_0,0) +\phi(t,0,u)$
as in 
the case of semi-uniform ISS discussed in Theorem~\ref{thm:semi_to_strong}.

The aim of this section is to give
a sufficient condition for
bilinear systems satisfying Assumption~\ref{assump:Lipschitz}
to be polynomially iISS for $\mathcal{U} = L^{\infty}(\mathbb{R}_+,U)$.
We prove that 
if the nonlinear operator  additionally satisfies a certain
smoothness assumption, then the bilinear system is
polynomially iISS.
To this end, we use a non-Lyapunov method devised in Theorem~4.2 of \cite{Mironchenko2016} for
uniform iISS.
\begin{theorem}
	Let $A$ be the generator of a polynomially stable semigroup
	$(T(t))_{t\geq 0}$  with parameter $\alpha >0$ on a Banach space $X$.
	Suppose that the nonlinear operator $F$ satisfies
	Assumption~\ref{assump:Lipschitz} for another Banach space $U$ 
	and that
	there exist $K>0$ and $\chi \in \mathcal{K}$ such that
	for all $\xi \in X$, $v \in U$, and $t \geq 0$, 
	\[
	\|T(t)G(\xi,v) \| \leq \frac{K \|\xi\| \chi(\|v\|_U)}{(t+1)^{1/\alpha}}.
	\]
	Then the bilinear system $\Sigma(A,F)$ is
	polynomially iISS with parameter $\alpha$ for $\mathcal{U} = L^{\infty}(\mathbb{R}_+,U)$.
\end{theorem}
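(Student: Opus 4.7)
The plan is to verify the two defining conditions of polynomial iISS: uniform global stability at zero, and the integral estimate with $\kappa(r,t)=O(t^{-1/\alpha})$. The bulk of the argument is a Gronwall estimate in which the polynomial decay is absorbed via the substitution $y(t):=(t+1)^{1/\alpha}\|x(t)\|$; the main obstacle will be the final step, in which a natural \emph{product} bound of the form ``decaying-in-$t$ times exponential in input energy'' must be converted into the additive $\kappa+\theta$ form required by the definition. For clarity I present the argument in the representative case $B=0$ of Assumption~\ref{assump:Lipschitz}.

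Setting $t=0$ in the standing hypothesis yields $\|G(\xi,v)\|\le K\|\xi\|\chi(\|v\|_U)$, which combined with the uniform bound $\|T(t)\|\le M_1$ inherent in polynomial stability gives, via the mild formulation and Gronwall's inequality, the a priori estimate
\[
\|\phi(t,x_0,u)\|\le M_1\|x_0\|\exp\!\left(K\int_0^t\chi(\|u(s)\|_U)\,ds\right).
\]
For $u\in L^\infty(\mathbb{R}_+,U)$ the right-hand side is finite on every bounded interval, so the blow-up alternative provided by Assumption~\ref{assump:Lipschitz} is excluded, proving forward completeness. When $u\equiv 0$, the property $\chi(0)=0$ forces $G(\xi,0)=0$, hence $\phi(t,x_0,0)=T(t)x_0$, and uniform global stability at zero holds with $\gamma(r):=M_1 r$.

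For $x_0\in D(A)$, Theorem~\ref{thm:equivalence_SUS} ensures $0\in\varrho(A)$, so $\|T(t)x_0\|\le\|T(t)A^{-1}\|\,\|x_0\|_A\le M'\|x_0\|_A/(t+1)^{1/\alpha}$ for some $M'>0$. Inserting this into the mild formulation together with the standing decay hypothesis yields
\[
\|x(t)\|\le\frac{M'\|x_0\|_A}{(t+1)^{1/\alpha}}+K\int_0^t\frac{\|x(s)\|\chi(\|u(s)\|_U)}{(t-s+1)^{1/\alpha}}\,ds.
\]
Multiplying by $(t+1)^{1/\alpha}$, setting $y(t):=(t+1)^{1/\alpha}\|x(t)\|$, and using the elementary inequality $(s+1)(t-s+1)\ge t+1$ valid for $0\le s\le t$, the integral kernel simplifies and one obtains the scalar linear inequality $y(t)\le M'\|x_0\|_A+K\int_0^t y(s)\chi(\|u(s)\|_U)\,ds$. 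Gronwall's lemma then produces the product bound
\[
\|x(t)\|\le\frac{M'\|x_0\|_A}{(t+1)^{1/\alpha}}\exp(KE(t)),\qquad E(t):=\int_0^t\chi(\|u(s)\|_U)\,ds.
\]

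To finish, I split $e^{KE(t)}=1+(e^{KE(t)}-1)$ and apply Young's inequality $ab\le\tfrac12(a^2+b^2)$ to the cross term $M'\|x_0\|_A(e^{KE(t)}-1)$; together with $(t+1)^{-1/\alpha}\le 1$, used to shed the decay factor from the input-dependent piece, this yields
\[
\|x(t)\|\le\frac{M'\|x_0\|_A+\tfrac12(M'\|x_0\|_A)^{2}}{(t+1)^{1/\alpha}}+\tfrac12\bigl(e^{KE(t)}-1\bigr)^{2}.
\]
Defining $\mu:=\chi\in\mathcal{K}$, $\theta(z):=\tfrac12(e^{Kz}-1)^{2}\in\mathcal{K}_\infty$, and $\kappa(r,t):=(M'r+\tfrac12(M')^{2}r^{2})/(t+1)^{1/\alpha}\in\mathcal{KL}$ completes the proof, since $\kappa(r,t)=O(t^{-1/\alpha})$ as $t\to\infty$.
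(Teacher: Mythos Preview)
Your core argument---the substitution $y(t)=(t+1)^{1/\alpha}\|x(t)\|$, the kernel bound $(s+1)(t-s+1)\ge t+1$, and Gronwall---matches the paper exactly. Where you diverge is in converting the resulting product bound into the additive $\kappa+\theta$ form: the paper routes through logarithms, using $\ln(1+ae^{b})\le\ln(1+a)+b$ and $\ln(1+a+b)\le\ln(1+a)+\ln(1+b)$, then inverts via $e^{a+b}-1\le(e^{2a}-1)+(e^{2b}-1)$ applied twice; your split $e^{KE}=1+(e^{KE}-1)$ followed by Young's inequality (applied under the factor $(t+1)^{-1/\alpha}$, so that the $\|x_0\|_A^2$ contribution retains the decay) is more direct and yields simpler comparison functions in the $B=0$ case.

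The restriction to $B=0$ is not innocuous, however. With $B\neq 0$ the term $\int_0^t T(t-s)Bu(s)\,ds$ carries no polynomial decay, so after the substitution the forcing in your Gronwall inequality becomes $M'\|x_0\|_A+M_1\|B\|(t+1)^{1/\alpha}\int_0^t\|u(s)\|_U\,ds$, and the product bound acquires the additional, purely input-dependent factor $M_1\|B\|\int_0^t\|u(s)\|_U\,ds\cdot e^{KE(t)}$. This \emph{can} be absorbed into $\theta\bigl(\int_0^t\mu(\|u(s)\|_U)\,ds\bigr)$ by enlarging $\mu$ to dominate both $r\mapsto r$ and $\chi$ (the paper takes $\mu(r)=\max\{M\|B\|r,\,4K\chi(r)\}$), and indeed the paper's logarithm trick is designed precisely to separate this sum-inside-product cleanly. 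As written, though, your proof does not cover the full statement; you should either carry the $B$ term through explicitly or indicate how your splitting extends.
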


\begin{proof}
	Since $(T(t))_{t\geq 0}$ is polynomially stable  with parameter 
	$\alpha >0$, there exists $M \geq 1$ such that
	\[
	\|T(t)\| \leq M,\quad 
	\|T(t) R(1,A)\| \leq \frac{M}{(t+1)^{1/\alpha}}\qquad \forall t \geq 0.
	\] 	
	By Gronwall's inequality 
	(see Appendix A of \cite{Pata2011} for a simple proof), we have that for all $x_0 \in X$, $u \in L^{\infty}(\mathbb{R}_+,U)$, and $t \geq 0$,
	\begin{align*}
	\|x(t)\| &\leq M
	\left(
	\|x_0\| + \|B\| \int^t_0 \|u(s)\|_U \mathrm{d}s
	\right) + K 
	\int^t_0 \|x(s)\| \chi (\|u(s)\|_U)\mathrm{d}s \\
	&\leq M(\|x_0\| + t\|B\|~\!\|u\|_{\infty}) e^{tK \chi(\|u\|_{\infty}) }
	\end{align*}
	as long as $x$ is a mild solution of $\Sigma(A,F)$ on $[0,t]$.
	Hence $\Sigma(A,F)$ is forward complete by the remark following
	Assumption~\ref{assump:Lipschitz}.
	Moreover, 
	$F(\xi,0) = 0$ for all $\xi \in X$ under Assumption~\ref{assump:Lipschitz}. Therefore,
	if $u(t) \equiv 0$, then the mild solution $x$  of $\Sigma(A,F)$
	satisfies
	\[
	\|x(t)\| = \|T(t)x_0\| \leq M\|x_0\|
	\]
	for all $x_0 \in X$ and $t \geq 0$, which implies that $\Sigma(A,F)$ is uniformly
	globally stable at zero.
	
	Take $x_0 \in D(A)$ and $u \in L^{\infty}(\mathbb{R}_+,U)$.
	The mild solution $x$ of $\Sigma(A,F)$ satisfies
	\begin{align*}
	\|x(t)\| \leq \frac{M}{(t+1)^{1/\alpha}} \|x_0\|_A + 
	\int^t_0 
	\bigg( &M \|B\|~\!\|u(s)\|_U \\
	&+ 
	\frac{K}{(t-s+1)^{1/\alpha}} \|x(s)\| \chi (\|u(s)\|_U) \mathrm{d}s
	\bigg)
	\end{align*}
	for all $t \geq 0$.
	Define $z(t) := (t+1)^{1/\alpha} \|x(t)\|$ for $t \geq 0$.
	Then
	\begin{align*}
	z(t) \leq M 
	&\left(\|x_0\|_A +  \|B\| (t+1)^{1/\alpha} \int^t_0 \|u(s)\|_U \mathrm{d}s\right) \\
	&\qquad + 
	K \int^t_0 
	\left(
	\frac{t+1}{(t-s+1)(s+1)} 
	\right)^{1/\alpha}
	z(s)  \chi (\|u(s)\|_U) \mathrm{d}s
	\end{align*}
	for all $t \geq 0$.
	Since 
	\[
	\max_{0\leq s \leq t}
	\left(
	\frac{t+1}{(t-s+1)(s+1)} 
	\right)^{1/\alpha} = 1,
	\]
	Gronwall's inequality implies that for all $t \geq 0$,
	\[
	z(t) \leq 
	M \left(
	\|x_0\|_A  +\|B\|  (t+1)^{1/\alpha} \int^t_0 \|u(s)\|_U \mathrm{d}s 
	\right) e^{
		K \int^t_0  \chi (\|u(s)\|_U) \mathrm{d}s
	},
	\]
	which is equivalent to
	\[
	\|x(t)\| \leq 
	M \left(
	\frac{\|x_0\|_A  }{(t+1)^{1/\alpha} } +\|B\|\int^t_0 \|u(s)\|_U \mathrm{d}s 
	\right) e^{
		K \int^t_0  \chi (\|u(s)\|_U) \mathrm{d}s
	}.
	\] 
	Using the inequality 
	\[
	\ln (1+ae^b) \leq \ln(1+a)+b\qquad \forall a,b \geq 0,
	\]
	we obtain
	\begin{align*}
		&\ln (1+\|x(t)\|) \\
		&\leq 
		\ln \left(
		1+ M \left(
		\frac{\|x_0\|_A  }{(t+1)^{1/\alpha} } +\|B\|\int^t_0 \|u(s)\|_U \mathrm{d}s 
		\right)
		\right) +
		K \int^t_0  \chi (\|u(s)\|_U) \mathrm{d}s
	\end{align*}
	for all $t \geq 0$.
	Since 
	\[
	\ln(1+ a+b) \leq \ln (1+a) + \ln (1+b)\qquad \forall a,b \geq 0,
	\]
	it follows that 
	\begin{align*}
	\ln (1+\|x(t)\|)  \leq 
	\ln\left(
	1+ 
	\frac{M\|x_0\|_A  }{(t+1)^{1/\alpha} } 
	\right) 
	&+ 
	\ln\left(
	1
	+ 
	M\|B\|\int^t_0 \|u(s)\|_U \mathrm{d}s 
	\right)  \\
	&+
	K \int^t_0  \chi (\|u(s)\|_U) \mathrm{d}s
	\end{align*}
	for all $t \geq 0$.
	The inverse function of $q(r) := \ln (1+r)$, $r \geq 0$, is given by $q^{-1}(r) =  e^{r}-1$. Using the inequality
	\[
	e^{a+b}-1 \leq (e^{2a} -1) + (e^{2b} - 1)\qquad \forall a, b \geq 0
	\]
	twice, we obtain
	\begin{align*}
		\|x(t)\| \leq 
		\left(1+
		\frac{M\|x_0\|_A  }{(t+1)^{1/\alpha}} 
		\right)^2 - 1 
		&+
		\left(
		1+ 
		M\|B\|\int^t_0 \|u(s)\|_U \mathrm{d}s 
		\right)^4 - 1 \\
		&+
		e^{4K \int^t_0  \chi (\|u(s)\|_U) \mathrm{d}s} - 1
	\end{align*}
	for all $t \geq 0$.
	Thus, 	
	the bilinear system $\Sigma(A,F)$ is polynomially iISS 
	with parameter $\alpha$, where $\kappa \in \mathcal{KL}$, $\theta \in \mathcal{K}_{\infty}$, and
	$\mu \in \mathcal{K}$ 
	are given by
	\begin{align*}
		\kappa(r,t) &:= \left(
		\frac{Mr}{(t+1)^{1/\alpha}}
		\right)^2 +  
		\frac{2Mr}{(t+1)^{1/\alpha}} \\
		\theta(r) &:= r^4+4r^3+6r^2+4r + e^{r}-1\\
		\mu(r) &:= \max \{M\|B\| r, 4K \chi(r) \}
	\end{align*}
	for the estimate \eqref{eq:semi_iISS}.
	\qed
\end{proof}

\section{Conclusion}
We have introduced the notion of semi-uniform ISS
and have established its characterization based on attractivity properties.
We have given sufficient conditions for linear systems to be 
polynomially ISS.
In the sufficient conditions, the range of the input operator is restricted,
depending on 
the polynomial decay rate of the product of the $C_0$-semigroup and
the resolvent of its generator.
We have also shown that a class of
bilinear systems are polynomially iISS if the 
nonlinear operator satisfies a smoothness assumption
like the range condition of input operators for 
polynomial ISS of linear systems.
Important directions for future research are
to explore the relation between 
semi-uniform ISS and semi-uniform iISS and
to construct Lyapunov functions for
polynomial ISS and polynomial iISS.

\begin{acknowledgements}
	The author  would  like to thank the editor and anonymous reviewers
	for their careful reading of the manuscript and many insightful comments, 
	which, in particular, make the argument in Example~\ref{ex:diagonal}
	simpler.
\end{acknowledgements}


\begin{thebibliography}{10}
	\providecommand{\url}[1]{{#1}}
	\providecommand{\urlprefix}{URL }
	\expandafter\ifx\csname urlstyle\endcsname\relax
	\providecommand{\doi}[1]{DOI~\discretionary{}{}{}#1}\else
	\providecommand{\doi}{DOI~\discretionary{}{}{}\begingroup
		\urlstyle{rm}\Url}\fi
	
	\bibitem{Angeli1999}
	Angeli, D., Sontag, E.D.: {Forward completeness, unboundedness observability,
		and their Lyapunov characterizations}.
	\newblock Systems Control Lett. \textbf{38}, 209--217 (1999)
	
	\bibitem{Arendt2001}
	Arendt, W., Batty, C.J.K., Hieber, M., Neubrander, F.: Vector-valued Laplace
	Transforms and Cauchy Problems.
	\newblock Basel: Birkh\"auser (2001)
	
	\bibitem{Batkai2006}
	B\'atkai, A., Engel, K.J., Pr\"uss, J., Schnaubelt, R.: Polynomial stability of
	operator semigroups.
	\newblock Math. Nachr. \textbf{279}, 1425--1440 (2006)
	
	\bibitem{Batty2008}
	Batty, C.J.K., Duyckaerts, T.: {Non-uniform stability for bounded semi-groups
		on Banach spaces}.
	\newblock J. Evol. Equations \textbf{8}, 765--780 (2008)
	
	\bibitem{Borichev2010}
	Borichev, A., Tomilov, Y.: Optimal polynomial decay of functions and operator
	semigroups.
	\newblock Math. Ann. \textbf{347}, 455--478 (2010)
	
	\bibitem{Cazenave1998}
	Cazenave, T., Haraux, A.: An Introduction to Semilinear Evolution Equations.
	\newblock New York: Oxford Univ. Press (1998)
	
	\bibitem{Chill2019}
	Chill, R., Paunonen, L., Seifert, D., Stahn, R., Tomilov, Y.: Non-uniform
	stability of damped contraction semigroups (2019).
	\newblock \urlprefix\url{https://arxiv.org/pdf/1911.04804.pdf}.
	\newblock To appear in Anal. PDE
	
	\bibitem{Chill2020}
	Chill, R., Seifert, D., Tomilov, Y.: Semi-uniform stability of operator
	semigroups and energy decay of damped waves.
	\newblock Philos. Trans. Roy. Soc. A \textbf{378}, 20190614 (2020)
	
	\bibitem{Curtain2020}
	Curtain, R.F., Zwart, H.J.: An Introduction to Infinite-Dimensional Systems: A
	State Space Approach.
	\newblock New York: Springer (2020)
	
	\bibitem{Dashkovskiy2013}
	Dashkovskiy, S., Mironchenko, A.: Input-to-state stability of
	infinite-dimensional control systems.
	\newblock Math. Control Signals Systems \textbf{25}, 1--35 (2013)
	
	\bibitem{Engel2000}
	Engel, K.J., Nagel, R.: One-Parameter Semigroups for Linear Evolution
	Equations.
	\newblock New York: Springer (2000)
	
	\bibitem{Haase2006}
	Haase, M.: The Functional Calculus for Sectorial Operators.
	\newblock Basel: Birkh\"auser (2006)
	
	\bibitem{Hosfeld2022}
	Hosfeld, R., Jacob, B., Schwenninger, F.: Integral input-to-state stability of
	unbounded bilinear control systems.
	\newblock Math. Control Signals Systems  (2022).
	\newblock \doi{10.1007/s00498-021-00308-9}
	
	\bibitem{Jacob2020}
	Jacob, B., Mironchenko, A., Partington, J.R., Wirth, F.: {Noncoercive Lyapunov
		functions for input-to-state stability of infinite-dimensional systems}.
	\newblock SIAM J. Control Optim. \textbf{58}, 2952--2978 (2020)
	
	\bibitem{Jacob2018}
	Jacob, B., Nabiullin, R., Partington, J.R., Schwenninger, F.L.:
	{Infinite-dimensional input-to-state stability and Orlicz spaces}.
	\newblock SIAM J. Control Optim. \textbf{56}, 868--889 (2018)
	
	\bibitem{Jacob2021_arXiv}
	Jacob, B., Partington, J.R., Pott, S., Rydhe, E., Schwenninger, F.L.:
	{Laplace-Carleson embeddings and infinity-norm admissibility} (2021).
	\newblock \urlprefix\url{https://arxiv.org/pdf/2109.11465.pdf}
	
	\bibitem{Jayawardhana2008}
	Jayawardhana, B., Logemann, H., Ryan, E.P.: Infinite-dimensional feedback
	systems: the circle criterion and input-to-state stability.
	\newblock Commun. Inf. Systems \textbf{8}, 413--444 (2008)
	
	\bibitem{Karafyllis2016}
	Karafyllis, I., Krstic, M.: {ISS with respect to boundary disturbances for 1-D
		parabolic PDEs}.
	\newblock IEEE Trans. Automat. Control \textbf{61}, 3712--3724 (2016)
	
	\bibitem{Liu2005PDR}
	Liu, Z., Rao, B.: Characterization of polynomial decay rate for the solution of
	linear evolution equation.
	\newblock Angew. Math. Phys. \textbf{56}, 630--644 (2005)
	
	\bibitem{Mironchenko2016}
	Mironchenko, A., Ito, H.: Characterizations of integral input-to-state
	stability for bilinear systems in infinite dimensions.
	\newblock Math. Control Related Fields \textbf{6}, 447--466 (2016)
	
	\bibitem{Mironchenko2020}
	Mironchenko, A., Prieur, C.: Input-to-state stability of infinite-dimensional
	systems: recent results and open questions.
	\newblock SIAM Review \textbf{62}, 529--614 (2020)
	
	\bibitem{Mironchenko2018}
	Mironchenko, A., Wirth, F.: Characterizations of input-to-state stability for
	infinite-dimensional systems.
	\newblock IEEE Trans. Automat. Control \textbf{63}, 1692--1707 (2018)
	
	\bibitem{Nabiullin2018}
	Nabiullin, R., Schwenninger, F.L.: Strong input-to-state stability for
	infinite-dimensional linear systems.
	\newblock Math. Control Signals Systems \textbf{30}, Art. no. 4 (2018)
	
	\bibitem{Pata2011}
	Pata, V.: {Uniform estimates of Gronwall type}.
	\newblock J. Math. Anal. Appl. \textbf{373}, 264--270 (2011)
	
	\bibitem{Paunonen2011}
	Paunonen, L.: {Perturbation of strongly and polynomially stable Riesz-spectral
		operators}.
	\newblock Systems Control Lett. \textbf{60}, 234--248 (2011)
	
	\bibitem{Paunonen2012SS}
	Paunonen, L.: Robustness of strongly and polynomially stable semigroups.
	\newblock J. Funct. Anal. \textbf{263}, 2555--2583 (2012)
	
	\bibitem{Paunonen2013SS}
	Paunonen, L.: Robustness of polynomial stability with respect to unbounded
	perturbations.
	\newblock Systems Control Lett. \textbf{62}, 331--337 (2013)
	
	\bibitem{Paunonen2014OM}
	Paunonen, L.: Polynomial stability of semigroups generated by operator
	matrices.
	\newblock J. Evol. Equations \textbf{14}, 885--911 (2014)
	
	\bibitem{Rozendaal2019}
	Rozendaal, J., Seifert, D., Stahn, R.: {Optimal rates of decay for operator
		semigroups on Hilbert spaces}.
	\newblock Adv. Math. \textbf{346}, 359--388 (2019)
	
	\bibitem{Schmid2019}
	Schmid, J.: {Weak input-to-state stability: characterizations and
		counterexamples}.
	\newblock Math. Control Signals Systems \textbf{31}, 433--454 (2019)
	
	\bibitem{Sontag1989}
	Sontag, E.D.: Smooth stabilization implies coprime factorization.
	\newblock IEEE Trans. Automat. Control \textbf{34}, 435--443 (1989)
	
	\bibitem{Sontag1998}
	Sontag, E.D.: {Comments on integral variants of ISS}.
	\newblock Systems Control Lett. \textbf{34}, 93--100 (1998)
	
	\bibitem{Sontag1996}
	Sontag, E.D., Wang, Y.: New characterizations of input-to-state stability.
	\newblock IEEE Trans. Automat. Control \textbf{41}, 1283--1294 (1996)
	
	\bibitem{Su2020}
	Su, P., Tucsnak, M., Weiss, G.: Stabilizability properties of a linearized
	water waves system.
	\newblock Systems Control Lett. \textbf{138}, 104672 (2020)
	
	\bibitem{Tucsnak2009}
	Tucsnak, M., Weiss, G.: Observation and Control of Operator Semigroups.
	\newblock Basel: Birkh\"auser (2009)
	
	\bibitem{Weiss1989}
	Weiss, G.: Admissibility of unbounded control operators.
	\newblock SIAM J. Control Optim. \textbf{27}, 527--545 (1989)
	
	\bibitem{Xu1996}
	Xu, C.Z., Sallet, G.: {On spectrum and Riesz basis assignment of
		infinite-dimensional linear systems by bounded linear feedbacks}.
	\newblock SIAM J. Control Optim. \textbf{34}, 521--541 (1996)
	
\end{thebibliography}
\end{document}